\documentclass{article}

\usepackage[margin=3cm]{geometry}
\usepackage{amsmath}
\usepackage{amssymb}
\usepackage{amsthm}

\usepackage{pgf, tikz}
\usetikzlibrary{arrows, automata, positioning}

\usepackage{color}
\usepackage{hyperref}
\usepackage{subcaption}

\usepackage[version=3]{mhchem}
	\renewcommand{\longleftrightarrow}{\ce{<=>}}


\DeclareMathOperator{\interior}{int}
\DeclareMathOperator{\kernel}{ker}

\DeclareMathOperator{\sign}{sign}
\DeclareMathOperator{\supp}{supp}

\DeclareRobustCommand{\mybox}{\hfill $\Box$}
\DeclareRobustCommand {\mydiamond}{\hfill $\Diamond$}


\DeclareRobustCommand{\calC}{{\cal C}}
\DeclareRobustCommand{\calE}{{\cal E}}

\DeclareRobustCommand{\calO}{{\cal O}}
\DeclareRobustCommand{\calR}{{\cal R}}
\DeclareRobustCommand{\calS}{{\cal S}}

\newcommand{\Gnac}{G1}
\newcommand{\Gmon}{G2}
\newcommand{\Nc}{G3}
\newcommand{\consistency}{G4}
\newcommand{\rone}{r1}
\newcommand{\rtwo}{r2}
\newcommand{\rthree}{r3}

\newcommand{\Hzero}{\Gmon}

\newcommand{\foralpha}{\alpha}

\newcommand{\forcalR}{{\cal R}}

\makeatletter
\def\Ddots{\mathinner{\mkern1mu\raise\p@
\vbox{\kern7\p@\hbox{.}}\mkern2mu
\raise4\p@\hbox{.}\mkern2mu\raise7\p@\hbox{.}\mkern1mu}}
\makeatother


\DeclareRobustCommand{\r}{{\mathbb R}}
    \DeclareRobustCommand{\rplus}{\r_{\geqslant 0}}
    \DeclareRobustCommand{\rpluss}{\r_{> 0}}
\DeclareRobustCommand{\z}{{\mathbb Z}}

\DeclareRobustCommand{\zpluss}{\z_{> 0}}

\DeclareRobustCommand{\comment}[1]{}

\theoremstyle{plain}
\newtheorem{proposition}{Proposition}
\newtheorem{lemma}{Lemma}
\newtheorem{theorem}{Theorem}
\newtheorem{corollary}{Corollary}

\theoremstyle{definition}

\theoremstyle{remark}
\newtheorem{remark}{Remark}
\newtheorem{example}{Example}


\title{Intermediates and \\ Generic Convergence to Equilibria}
\author{Michael Marcondes de Freitas, Carsten Wiuf, and Elisenda Feliu \\ {\small Department of Mathematical Sciences, University of Copenhagen}}
\date{\today}

\begin{document}

\maketitle

\begin{abstract}
	Known graphical conditions for the generic and global convergence to equilibria of the dynamical system arising from a reaction network are shown to be invariant under the so-called successive removal of intermediates, a systematic procedure to simplify the network, making the graphical conditions  {considerably} easier to check.
	\end{abstract}

\vskip 2ex

\noindent {\bf Keywords.} Reaction Network Theory  $\cdot$  Model Reduction  $\cdot$  SR-graph  $\cdot$  Monotonicity in Reaction Coordinates

\tableofcontents

\section{Introduction}\label{sec:intro}

In recent years many works in reaction network theory have been concerned with the idea of model reduction or simplification. This interest is expressed along various lines of investigation. One direction is the natural problem of providing simpler models to describe or explain the same biochemical phenomenon \cite{radulescu--gorban--zinovyev--noel-2012}. Another dimension is the consolidation of known model simplification techniques typically justified and applied {\em ad hoc}, such as quasi-steady state approximations \cite{king--altman-1956,cornish-bowden--2004}, into a formal procedure \cite{saez--wiuf--feliu-2015}. A third line of inquiry contemplates whether certain qualitative properties of a reaction network, for instance, number of steady states \cite{feliu--wiuf-2013} or the property of persistence \cite{mmf--feliu--wiuf-2015}, are invariant under a simplification procedure. This work fits within this last category. The qualitative property of interest is generic convergence to equilibria ---the property that almost every solution within each stoichiometric compatibility class  {approaches} the set of equilibria--- and the model simplification procedure is the successive removal of intermediates.

To illustrate our contribution, consider the one-site phosphorylation mechanism modeled by the reaction network
\begin{equation}\label{eq:one_site}
	\begin{array}{ccccc}
		S_0 + E &\longleftrightarrow& S_0E &\longrightarrow& S_1 + E \\[1ex]
		S_1 + F &\longleftrightarrow& S_1F &\longrightarrow& \phantom{\,.}S_0 + F\,. \\
	\end{array}
\end{equation}
In this mechanism, $S_0$ and $S_1$ are, respectively, the dephosphorylated and phosphorylated forms of some protein. The phosphorylation and dephosphorylation reactions are catalyzed by a kinase $E$ and a phosphatase $F$. Intermediate steps in the process during which protein and catalyst are bound to one another are captured in $S_0E$ and $S_1F$. Activation/deactivation motifs such as this one appear in many important intracellular signaling processes regulating cell proliferation, differentiation and apoptosis in eukaryotes ranging from yeast to mammals \cite{widmann--spencer--jarpe--johnson-1999}.

In \cite{angeli--deleenheer--sontag-2010}, sufficient graphical conditions for a reaction network to exhibit generic {and global} convergence to equilibria (within each stoichiometric compatibility class) were given. The technique consists of checking that the R-graph of the network is such that every simple loop has an even number of negative edges, a property known as the positive loop property, and that there exists a directed path between any two reaction vertices in the directed SR-graph. 
 {We show in Theorem~\ref{thm:dsrgraph}  that, under some additional assumptions that also are required for the result on generic and global convergence to hold, the existence of a directed path between any two reaction vertices in the directed SR-graph is equivalent to the simpler condition of  connectedness of the R-graph. Therefore the graphical conditions are reduced to two conditions on the R-graph.}

For the one-site phosphorylation mechanism above, the 
R-graph is displayed in Figure \ref{fig:one_site_r}, and one can readily see that it satisfies the aforementioned conditions.

\begin{figure}[!ht]
  \hfill
  \begin{subfigure}[b]{.45\textwidth}
        \centering
        \begin{tikzpicture}[
			roundnode/.style={},
			squarednode/.style={rectangle, draw=black, minimum size=5mm},
			> = stealth, 
	        shorten > = 1pt, 
    	    shorten < = 1pt, 
        	auto,
	   	    node distance = 1.5cm, 
    	   	semithick 
			]
			\node[squarednode]		(R1)								{\scriptsize $R_1$};
			\node[squarednode]		(R2)			[right=of R1]		{\scriptsize $R_2$};
			\node[squarednode]		(R3)			[below=of R2]		{\scriptsize $R_3$};
			\node[squarednode]		(R4)			[left=of R3]			{\scriptsize $R_4$};
 
			\draw[-]	 	(R1) edge node {\scriptsize $+$} (R2);
			\draw[-]	 	(R2) edge node {\scriptsize $+$} (R3);
			\draw[-]	 	(R3) edge node {\scriptsize $+$} (R4);
			\draw[-]	 	(R4) edge node {\scriptsize $+$} (R1);			
		\end{tikzpicture}
\caption{R-graph of \eqref{eq:one_site}}\label{fig:one_site_r}
  \end{subfigure}%
\hfill
\begin{subfigure}[b]{0.45\textwidth}		
	\centering
	\begin{tikzpicture}[
		roundnode/.style={},
		squarednode/.style={rectangle, draw=black, minimum size=5mm},
		> = stealth, 
        shorten > = 1pt, 
        shorten < = 1pt, 
        auto,
   	    node distance = 1.5cm, 
       	semithick 
		]
		\node[squarednode]		(R1)		at		(0, 0)		{\scriptsize $R^*_1$};
		\node[squarednode]		(R2)		at		(2, 0)		{\scriptsize $R^*_2$};
		 
		\draw[-]				(R1) edge node {\scriptsize $+$} (R2);			
	\end{tikzpicture}
	
	\vspace{1.2cm}
	\caption{R-graph of \eqref{eq:reduced_one_site}}	\label{fig:r_of_reduced_one_site_2}
\end{subfigure}
\hfill
    \caption{The     R-graphs of \eqref{eq:one_site} and of \eqref{eq:reduced_one_site}.}\label{fig:one_site}
\end{figure}
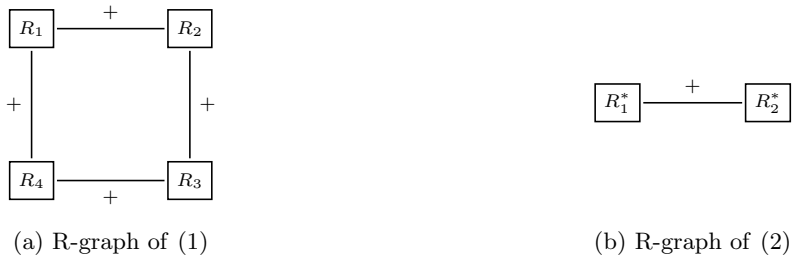

By successively removing the intermediates $S_0E$ and $S_1F$, what basically consists of ``collapsing'' the reaction paths through them, followed by canceling out the ``catalysts'' $E$ and $F$, which appear on both sides of their respective emerging reactions with the same stoichiometric coefficient, we obtain the simplified substrate network
\begin{equation}\label{eq:reduced_one_site}
	R^*_1\colon\quad S_0 \longrightarrow S_1
	\quad\quad\quad
	R^*_2\colon\quad S_1 \longrightarrow S_0\,.
\end{equation}
For this simplified network, the 
 R-graph is much simpler (Figure \ref{fig:r_of_reduced_one_site_2}), and  {therefore} the conditions for generic  {and global}  convergence are much easier to check. (The reason for not writing this as a single reversible reaction will become clearer when we introduce our working reaction network formalism in the next section.)

In what follows, we will show that 
 {connectivity and the positive loop property of the R-graph} are always invariant under the successive removal of intermediates  {under the assumptions of \cite{angeli--deleenheer--sontag-2010}}, meaning that the reduced network has them  {if and only if} the original one does also, as illustrated in the example above. Thus, the conditions for the original model can be checked in the often times much simpler reduced model. Therefore, although this ``invariance under reduction'' feature might be useful in the context of finding simpler models to describe the same observed phenomenon, it is also interesting on its own as a mathematical tool to analyze large, complicated models, even if the network obtained through the reduction procedure might not necessarily be understood to be biologically meaningful.

The approach to generic  {and global} convergence to equilibria in \cite{angeli--deleenheer--sontag-2010} is based upon the monotone systems theory of M. W. Hirsch \cite{hirsch-1988,smith-1995,hirsch--smith-2005}. The reader familiar with that theory will likely notice the connection, although most of the details have been deliberately hidden in our presentation by framing all concepts and results pertaining to monotonicity directly in terms of the graphical conditions given in \cite{angeli--deleenheer--sontag-2010}.

This paper is organized as follows. In Section \ref{sec:crns} we introduce our basic notation and working definition of reaction network, then review the graphical conditions for  {generic and global} convergence to equilibria of \cite{angeli--deleenheer--sontag-2010}  {and state Theorem~\ref{thm:dsrgraph}}. In Section \ref{sec:intermediates} we describe a systematic procedure to obtain a reduced reaction network by successively removing intermediates from a given network. We then state our main results concerning the invariance of the aforementioned conditions for  {generic and global} convergence under this procedure, and apply them to several examples in the recent reaction network literature. The last section is devoted to the technical details of the proofs of our main results.

\section{Reaction Networks}\label{sec:crns}

In what follows, we denote the set of nonnegative real numbers by $\rplus$, and denote the set of strictly positive real (respectively, integer) numbers by $\rpluss$ (respectively, $\zpluss$). Given $n \in \zpluss$, we write $[n] := \{1, \ldots, n\}$. By convention, $[0] := \varnothing$. For each $a \in \r$,
			\[
				\sign a :=
					\left\{
					\begin{array}{rl}
						1\,, &\text{if} \ a > 0 \\[1ex]
						0\,, &\text{if} \ a = 0 \\[1ex]
						-1\,, &\text{if} \ a < 0\,.	
					\end{array}
					\right.
			\]

\subsection{Basic Formalism}

We start by introducing our working definition of reaction network. A {\em complex} over a nonempty, finite set $\calS = \{S_1, \ldots, S_n\}$ is a vector $(\alpha_1, \ldots, \alpha_n) \in \rplus^n$, often also expressed as the formal linear combination $\alpha_1S_1 + \cdots + \alpha_nS_n$. In this context, the elements of $\calS$ are referred to as the {\em species} constituting the complex,  {and $\alpha_i$ as the \emph{stoichiometric coefficient} of $S_i$ in the complex}. A {\em reaction} over a set of complexes $\calC$ is an object of the form $y \longrightarrow y'$ or $y \longleftrightarrow y'$ for some $y, y' \in \calC$, $y \neq y'$. The former are referred to as {\em irreversible} reactions, while the latter are called {\em reversible}. In either case, $y$ is called the {\em reactant} of the reaction, and $y'$ the {\em product}.

A {\em reaction network}  {(or just network)} is an ordered triple $G = (\calS, \calC, \calR)$ where $\calC$ is a set of complexes over a nonempty, finite set of species $\calS = \{S_1, \ldots, S_n\}$, and $\calR = \{R_1, \ldots, R_m\}$ is a nonempty, finite set of reactions over $\calC$.   {We do not assume that the reactions  necessarily all are different  nor that reactions of the form   $y\cee{<=>}y'$, $y'\cee{<=>}y$, $y\cee{->}y'$ and $y'\cee{->}y$ are mutually exclusive. }
  {The reason for this (unusual) convention will be made clear later on and is essential for our results to hold.}

We write $\calR = \calR_{\rightarrow} \cup \calR_{\leftrightarrow}$, where $\calR_{\rightarrow}$ and $\calR_{\leftrightarrow}$ are the disjoint subsets of irreversible and reversible reactions, respectively. We further assume that, for each $i \in [n]$, there exists an $(\alpha_1, \ldots, \alpha_n) \in \calC$ such that $\alpha_i > 0$, and, for each $y \in \calC$, there exists a reaction in $\calR$ having $y$ as a reactant or product; in other words, $\calS$ (respectively, $\calC$) is the minimal set over which $\calC$ (respectively, $\calR$) may be defined. 

For each $j \in [m]$, let $\alpha_{1j}S_1 + \cdots + \alpha_{nj}S_n$ be the reactant and $\alpha'_{1j}S_1 + \cdots + \alpha'_{nj}S_j$ be the product of reaction $R_j$. With this notation, we may define the $n \times m$ matrix $N$,
\[
	N_{ij} := \foralpha'_{ij} - \foralpha_{ij}\,,
	\quad
	i = 1, \ldots, n\,,
	\quad
	j = 1, \ldots, m\,,
\]
known as the {\em stoichiometric matrix} of the network. The column-space of $N$, which is a subset of $\r^n$, is called the {\em stoichiometric subspace of $G$}, and denoted by $\Gamma$. A vector $c \in \r^n$ is said to be a {\em conservation law} of $G$ if $c \in \Gamma^\perp$. The subsets $(s_0 + \Gamma) \cap \rplus^n$, for $s_0 \in \rplus^n$, are known as the {\em stoichiometric compatibility classes} of $G$.

The system of ordinary differential equations modeling the evolution of the concentrations of the species of the network $G$ is then given by
\begin{equation}\label{eq:crnode}
\frac{ds}{dt} = Nr(s(t))\,,
\quad
t \in \rplus\,,
\quad
s \in \rplus^n\,,
\end{equation}
where $r = (r_1, \ldots, r_m)\colon \rplus^n \rightarrow \r^m$ is a vector-valued function  {that models the kinetic rate of each reaction as a function of the species concentrations.  If two reactions in $\calR$ are the same, they might or might not have the same kinetic rates.}

 {Our results are applicable to networks and kinetic rates that fulfil the requirements for generic and global convergence  in \cite{angeli--deleenheer--sontag-2010}, which we review in the next subsection.
The assumptions (\Gnac)--(\consistency) below refer to properties of the network and the assumptions (\rone)--(\rthree) refer to properties of the kinetic rates.
}

\begin{itemize}
	\item[(\Gnac)] There are no {\em auto-catalytic} reactions, meaning that no species can appear as both reactant and product in any reaction. Thus, $\alpha_{ij}\alpha'_{ij} = 0$ for any reaction $R_j \in \calR$ and any species $S_i \in \calS$.
	
	\item[(\Gmon)] Each species in $\calS$ takes part in at most two reactions in $\forcalR$.
	
	\item[(\Nc)] The network is {\em conservative}, that is, it has a conservation law $c \in \rpluss^n$  {with all entries positive}.
	
	 {
	\item[(\consistency)] The network is {\em consistent}, that is,  there exists a vector $v \in \ker N$ such that $v_j>0$ for all $j\in [m]$  for which  $R_j$ is irreversible.}

\medskip
	\item[(\rone)] For each $j \in [m]$, if $R_j$ is irreversible, then $r_j(s) \geqslant 0$, $s \in \rplus^n$; if $R_j$ is reversible, then $r_j = r_j^f - r_j^b$, where $r_j^f(s), r_j^b(s) \geqslant 0$, $s \in \rplus^n$. Furthermore, all the $r_j, r_j^f, r_j^b\colon \rplus^n \longrightarrow \rplus$ have continuously differentiable extensions to a neighborhood $\calO$ of $\r_{\geqslant 0}^{n}$.

	\item[(\rtwo)] For each $j \in [m]$, and for each $s = (s_1, \ldots, s_{n}) \in \r_{\geqslant 0}^{n}$,
		\begin{itemize}
			\item[({\em i}\,)] if $R_j$ is irreversible, then
				\[
					s_k = 0
					\
					\text{for some} \ k \in \{i \in [n]\,|\ \alpha_{ij} > 0\}
					\quad
					\Rightarrow
					\quad
					r_j(s) = 0\,;
				\]
			
			\item[({\em ii}\,)] if $R_j$ is reversible, then 
				\[
					s_k = 0
					\
					\text{for some} \ k \in \{i \in [n]\,|\ \alpha_{ij} > 0\}
					\quad
					\Rightarrow
					\quad
					 { r_j^f(s) = 0\,,}
				\]
				and
				\[
					s_k = 0
					\
					\text{for some} \ k \in \{i \in [n]\,|\ \alpha'_{ij} > 0\}
					\quad
					\Rightarrow
					\quad
					 { r_j^b(s) = 0\,.}
				\]
		\end{itemize}

	\item[(\rthree)] For each $j \in [m]$,
		\begin{itemize}
			\item[({\em i}\,)] if $R_j$ is irreversible, then
				\begin{equation*}
					\frac{\partial r_j}{\partial s_i}(s)
					\left\{
						\begin{array}{rl}
						\geqslant 0\,, & \text{if} \ \alpha_{ij} > 0 \\[1ex]
						= 0\,, & \text{if} \ \alpha_{ij} = 0\,.
						\end{array}
					\right.
				\end{equation*}
			
			\item[({\em ii}\,)] if $R_j$ is reversible, then
				\begin{equation*}
					\frac{\partial r_j}{\partial s_i}(s)
					\left\{
						\begin{array}{rl}
						\geqslant 0\,, & \text{if} \ \alpha_{ij} > 0 \\[1ex]
						= 0\,, & \text{if} \ \alpha_{ij} = 0 \\[1ex]
						\leqslant 0\,, & \text{if} \ \alpha'_{ij} > 0\,.
						\end{array}
					\right.
				\end{equation*}
		\end{itemize}
		Furthermore, the inequalities are strict in $\rpluss^n$.
\end{itemize}

 {  
The hypotheses (\Gnac)--(\consistency)  substantially reduce the number of reaction networks under consideration in \cite{angeli--deleenheer--sontag-2010} as well as in this study. While the hypotheses (\Gnac), (\Nc) and (\consistency) are fulfilled for many realistic networks, the hypothesis (\Gmon) is very restrictive (and also limits the number of versions of the same reaction that can be  in $\calR$). Nevertheless, there are several relevant and arbitrarily large networks that fall into our setting. Some examples are given in Subsection~\ref{subsec:examples}. 
}

\begin{remark}\label{rem:collapse}
In the literature, one typically defines reaction networks directly from their {\em reaction graphs} \cite{mmf--feliu--wiuf-2015}, keeping reciprocal reactions as distinct reactions, or, alternatively, by collapsing each pair of reciprocal reactions into a single reversible reaction \cite{angeli--deleenheer--sontag-2010}. Our approach accommodates both extremes, plus anything in between, since it does not preclude the possibility that both $y \longrightarrow y', y' \longrightarrow y \in \calR$. In other words, one  {may} choose at the beginning which pairs of reciprocal reactions to collapse into a single reversible reaction, and which ones not to,  { as long as (\Gmon) is not violated}.

 {We note that our setting is slightly more general than that of \cite{angeli--deleenheer--sontag-2010}. Specifically, we have some freedom in choosing which reactions are treated as reversible and which as two irreversible reactions as mentioned above. Also  our assumptions on the kinetic rates are slightly less restrictive. However, the results  we make use of from \cite{angeli--deleenheer--sontag-2010}, also hold in our setting.}

 { 
The hypotheses (\rone)--(\rthree) on the kinetic rates impose standard restrictions on the rate of each reaction, when reversible reactions are considered as two distinct reactions (a forward reaction with kinetic rate $r_j^f$ and a backward reaction with kinetic rate $r_j^b$). Under this consideration, assumption (\rone) states the nonnegativity of the rates, assumption (\rtwo) requires that the rates vanish  if one of the species in the reactant is not present, and assumption (\rthree), together with (\Gnac), gives that the kinetic rates increase in the concentrations of the species in the reactant and do not depend on any other concentration. }

Hypotheses (\rone)--(\rthree) are satisfied under the most common kinetic assumptions in the literature, namely, mass-action, or more general power-law kinetics, Michaelis-Menten kinetics, or Hill kinetics, as well as combinations of these \cite[pages 585--586]{angeli--deleenheer--sontag-2010}. 

It follows from (\rtwo) and \cite[Theorem 5.6]{smirnov-2002} that $\r_{\geqslant 0}^{n}$ is forward invariant for the flow of (\ref{eq:crnode}). We then conclude that the interior, $\rpluss^n$, is also forward-invariant via \cite[Remark 16.3(h)]{amann-1990}. (See also \cite[Section VII]{Sontag:2001}.) And in view of (\Nc), the trajectories of (\ref{eq:crnode}) are defined for all positive time, and also precompact.  {If the reverse implications in (\rtwo) hold, then (\consistency) is required for system \eqref{eq:crnode} to admit positive equilibria. }
\mybox
\end{remark}

\subsection{Graphical Conditions for  {Generic and Global} Convergence}

We now review the concepts and results from \cite{angeli--deleenheer--sontag-2010} that we will need, pointing out that they still hold in our slightly more general setting.  {We state also Theorem~\ref{thm:dsrgraph}, that simplifies the graphical conditions to be checked for generic and global convergence.}  {We start by introducing three graphical objects and their properties. }

The {\em directed SR-graph} of a reaction network $G$ is the directed, bipartite, labeled graph $G^\rightarrow_{SR} = (V^\rightarrow_{SR}, E^\rightarrow_{SR}, L^\rightarrow_{SR})$ defined as follows. The set of vertices $V^\rightarrow_{SR}$ is the disjoint union  {of the set of species and the set of reactions,}
\[
	V^\rightarrow_{SR} := \calS \cup \forcalR = \calS \cup (\forcalR_\rightarrow \cup \forcalR_\leftrightarrow)\,.
\]
The set of edges $E_{SR}^\rightarrow$ and the labeling $L_{SR}^\rightarrow$ are then characterized as follows.
\begin{itemize}
\item[({\em i}\,)] An ordered pair $(S_i, R_j) \in \calS \times \forcalR_\rightarrow$ belongs to $E_{SR}^\rightarrow$  {if and only if} $S_i$ is a reactant of $R_j$, that is,  {if and only if} $\foralpha_{ij} > 0$.

\item[({\em ii}\,)] An ordered pair $(S_i, R_j) \in \calS \times \forcalR_\leftrightarrow$ belongs to $E_{SR}^\rightarrow$  { if and only if} $S_i$ appears on either side of $R_j$, that is,  {if and only if} $\foralpha_{ij} + \foralpha'_{ij} > 0$.

\item[({\em iii}\,)] An ordered pair $(R_j, S_i) \in \forcalR \times \calS$ belongs to $E_{SR}^\rightarrow$  {if and only if} $S_i$ is part of $R_j$ as either a reactant or a product, that is,  {if and only if} $\alpha_{ij} + \alpha'_{ij} > 0$.

\item[({\em iv}\,)] $L_{SR}^\rightarrow(S_i, R_j) := - \sign N_{ij}$ for every $(S_i, R_j) \in E_{SR}^\rightarrow$, and $L_{SR}^\rightarrow(R_j, S_i) := - \sign N_{ij}$ for every $(R_j, S_i) \in E_{SR}^\rightarrow$.
\end{itemize}

 {In plain words, an edge from a reaction vertex to a species vertex indicates that the species is part of the reaction. An edge from a species vertex to a reaction vertex indicates that the species is in the reactant, if the reaction is irreversible, or that the species is part of the reaction, if the reaction is reversible. This type of edge encodes that the kinetic rate of the reaction depends on the concentration of the species (refer to (r3)). The label of the edge is then simply given by minus the entry of the stoichiometric matrix corresponding to the given species and reaction. }

The directed SR-graph is said to be {\em R-strongly connected} if, for every $R_j, R_k \in \forcalR$, there exists a directed path in $G^\rightarrow_{SR}$ connecting $R_j$ to $R_k$.

\begin{remark}\label{rem:SR_labeling}
	If $(S_i, R_j), (R_j, S_i) \in E^{\rightarrow}_{SR}$ for some $i \in [n]$ and some $j \in [m]$, then both edges get the same label. 
	 {The  only edges $(U,V)\in E^{\rightarrow}_{SR}$ for which $(V,U)\notin E^{\rightarrow}_{SR}$  are of the form $(R_j, S_i)$ with $R_j$ irreversible and $S_i$ in the product of $R_j$.  
}
	\mybox
\end{remark}

The {\em SR-graph} of $G$ is the undirected, labeled graph $G_{SR} = (V_{SR}, E_{SR}, L_{SR})$ where
	\[
		V_{SR} := V^\rightarrow_{SR} = \calS \cup \forcalR\,,
	\]
	\[
		E_{SR} := \{\{S_i, R_j\}\,|\ (S_i, R_j) \in E^\rightarrow_{SR} \ \text{or} \ (R_j, S_i) \in E^\rightarrow_{SR}\} = \{ \{S_i, R_j\}\,|\ N_{ij} \neq 0\}\,,
	\]
	and
	\[
		L_{SR}(\{S_i, R_j\}) := - \sign N_{ij}\,,
		\quad
		\{S_i, R_j\} \in E_{SR}\,.
	\]
In view of Remark \ref{rem:SR_labeling}, the SR-graph is simply the undirected graph underlying the directed SR-graph, and there are no multiple edges connecting any two vertices.

The {\em R-graph} is the undirected, labeled graph $G_R = (V_R, E_R, L_R)$ 
 {with vertices given by the set of reactions and such that two reactions are connected by an edge if there exists a species that is part of both reactions. Specifically, the R-graph is constructed as follows.}
The vertices set is defined as 
	\[
		V_R := \forcalR\,.
	\] 
	Furthermore,
	\[
		E_R := \{\{R_j, R_k\}\,|\ j \neq k \ \text{and} \ N_{ij}N_{ik} \neq 0 \ \text{for some} \ i \in [n]\}\,,
	\]
	and, for each $\{R_j, R_k\} \in E_R$,  {the labeling is given by}
	\[
		L_R(\{R_j, R_k\}) := \{-\sign N_{ij}N_{ik}\,|\ N_{ij}N_{ik} \neq 0 \ \text{and} \ i \in [n]\}\,.
	\]
	We emphasize that $L_R$ is a set-valued function.  

The R-graph is said to have the {\em positive loop property} if every labeled simple loop
\[
	R_{j_1} \stackrel{L_1}{\text{---}} R_{j_2} \stackrel{L_2}{\text{---}} \cdots \stackrel{L_{\ell-1}}{\text{---}} R_{j_\ell} \stackrel{L_\ell}{\text{---}} R_{j_1}
\]
in $G_R$ has an even number of negative labels, that is, $L_1L_2 \cdots L_\ell = 1$ for any choice of $L_1 \in L_R(\{R_{j_1}, R_{j_2}\})$, $L_2 \in L_R(\{R_{j_2}, R_{j_3}\})$, \ldots, $L_\ell \in L_R(\{R_{j_\ell}, R_{j_1}\})$.

The R-graph can be obtained from the SR-graph by placing an edge between two reaction vertices in the R-graph whenever there is a length-2 path connecting the two corresponding reaction vertices in the SR-graph, and labeling that edge with the opposite of the product of the labels along the length-2 path in the SR-graph. If there are more than one length-2 path connecting any two reaction vertices, it is possible that the corresponding edge in the R-graph gets multiple labels. The positive loop property of the R-graph can then be checked by inspecting the SR-graph from which it is built. This was done in \cite{angeli--deleenheer--sontag-2010}, and we cite the relevant result here for ease of reference.

Let $\Lambda\colon V_0 \ \text{---} \ V_1 \ \text{---} \ \cdots \ \text{---} \ V_{2\lambda} \ = \ V_0$, $\lambda \in \zpluss$, be any simple loop in the SR-graph. If
	\[
		\prod_{k = 1}^{2\lambda} L_{SR}(\{V_{k-1}, V_k\}) = (-1)^\lambda\,,
	\]
	then we call $\Lambda$ an {\em e-loop}. Otherwise we call it an {\em o-loop}. In this definition, we know the length of a simple loop in the SR-graph is always an even number because the SR-graph is a bipartite graph.
 
\begin{proposition}\label{prop:posloop}
	Let $G$ be a reaction network satisfying {\em (G1)--(G2)}. Then the R-graph has the positive loop property  {if and only if} all simple loops in the SR-graph are e-loops.
\end{proposition}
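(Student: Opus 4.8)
The plan is to exploit the explicit construction of $G_R$ from $G_{SR}$ recalled just before the statement: each edge $\{R_j,R_k\}\in E_R$ arises from one or more length-$2$ paths $R_j\,\text{---}\,S_i\,\text{---}\,R_k$ in $G_{SR}$, one for each species $S_i$ with $N_{ij}N_{ik}\neq 0$, and the label it contributes is $-\sign N_{ij}N_{ik}=-L_{SR}(\{S_i,R_j\})L_{SR}(\{S_i,R_k\})$. The core of the argument is a single label-bookkeeping identity. Given a simple loop $\Lambda\colon R_{j_1}\,\text{---}\,S_{k_1}\,\text{---}\,R_{j_2}\,\text{---}\,\cdots\,\text{---}\,R_{j_\lambda}\,\text{---}\,S_{k_\lambda}\,\text{---}\,R_{j_1}$ of length $2\lambda$ in $G_{SR}$ (bipartiteness forces this alternating shape), and the closed walk $R_{j_1}\,\text{---}\,\cdots\,\text{---}\,R_{j_\lambda}\,\text{---}\,R_{j_1}$ in $G_R$ in which the edge $\{R_{j_i},R_{j_{i+1}}\}$ carries the label $L_i:=-\sign N_{k_i,j_i}N_{k_i,j_{i+1}}$ supplied by $S_{k_i}$, I would compute
\[
  \prod_{i=1}^{\lambda} L_i \;=\; (-1)^\lambda \prod_{i=1}^{\lambda} \sign N_{k_i,j_i}\,\sign N_{k_i,j_{i+1}} \;=\; (-1)^\lambda \prod_{k=1}^{2\lambda} L_{SR}(\{V_{k-1},V_k\}),
\]
where $V_0,\ldots,V_{2\lambda}=V_0$ are the vertices of $\Lambda$. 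Hence $\prod_i L_i=1$ if and only if $\prod_k L_{SR}(\{V_{k-1},V_k\})=(-1)^\lambda$, i.e.\ the R-loop has an even number of negative labels exactly when $\Lambda$ is an e-loop. (Each $N_{ij}$ occurring here labels an edge and is therefore nonzero, so its sign is unambiguous; (G1) additionally makes that sign record whether $S_{k_i}$ enters as reactant or product, with no cancellation.)

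With the identity in hand it remains to match the two families of loops. In one direction, any simple loop $\Lambda$ in $G_{SR}$, after deleting its species vertices, projects to a loop in $G_R$ whose reaction vertices are pairwise distinct (because those of $\Lambda$ are) and whose consecutive pairs genuinely form edges of $G_R$ (since $N_{k_i,j_i}N_{k_i,j_{i+1}}\neq 0$), with each deleted species furnishing an admissible label; so the positive loop property yields $\prod_i L_i=1$, whence $\Lambda$ is an e-loop by the identity. In the other direction, starting from a simple loop in $G_R$ together with any admissible choice of labels, each label singles out a connecting species $S_{k_i}$, and inserting these species recovers a closed walk in $G_{SR}$ which I claim is again a \emph{simple} loop. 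This is the one place where (G2) is essential: if two chosen species coincided, say $S_{k_i}=S_{k_{i'}}$ with $i\neq i'$, that species would lie in all of $R_{j_i},R_{j_{i+1}},R_{j_{i'}},R_{j_{i'+1}}$, and since (G2) confines it to at most two reactions we would be forced to identify the edges $\{R_{j_i},R_{j_{i+1}}\}$ and $\{R_{j_{i'}},R_{j_{i'+1}}\}$, contradicting simplicity of the R-loop. Thus the inserted species are distinct, the lift is a bona fide simple loop of $G_{SR}$, hence an e-loop by hypothesis, and the identity gives $\prod_i L_i=1$.

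I expect the main obstacle to lie not in the sign computation but in the bookkeeping around multiply-labeled edges and the shortest loops. When two distinct species $S_k,S_{k'}$ join the same pair $R_j,R_{k}$, they give two length-$2$ paths in $G_{SR}$, hence a length-$4$ simple loop $R_j\,\text{---}\,S_k\,\text{---}\,R_{k}\,\text{---}\,S_{k'}\,\text{---}\,R_j$ and two (possibly unequal) labels in $L_R(\{R_j,R_{k}\})$; the case $\lambda=2$ of the correspondence is precisely this situation, and it must be read as the demand that $L_R(\{R_j,R_{k}\})$ be consistent, i.e.\ all its labels equal. I would therefore make explicit at the outset that the loops quantified over in the positive loop property include these degenerate length-$2$ loops (equivalently, that $G_R$ is viewed as the multigraph carrying one edge per shared species), so that the ``for any choice of $L_i$'' clause and the $\lambda=2$ e-loop condition line up. Once that convention is fixed, the two directions above close the equivalence.
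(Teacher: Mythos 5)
The paper does not give a proof of this proposition at all --- it simply cites \cite[Proposition~4.5]{angeli--deleenheer--sontag-2010} --- so there is no in-text argument to compare yours against line by line; what you have written is a correct, self-contained proof. Your sign identity $\prod_{i=1}^{\lambda} L_i = (-1)^{\lambda}\prod_{k=1}^{2\lambda} L_{SR}(\{V_{k-1},V_k\})$ checks out and converts ``the product of the R-labels is $1$'' precisely into the e-loop condition; the projection of a simple SR-loop to a labelled simple R-loop is immediate; and your use of (G2) in the lifting direction is exactly where that hypothesis is needed (equivalently: by (G2) a species supplies a label to at most one R-edge, namely the unique pair of reactions it belongs to, so distinct edges of a simple R-loop can never share a connecting species, which forces the inserted species vertices to be pairwise distinct). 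The one genuine delicacy is the one you flag yourself: the length-$2$ closed walks $R_j\ \text{---}\ R_k\ \text{---}\ R_j$ over a multiply-labelled edge must be counted among the ``simple loops'' in the positive loop property, so that the property forces all labels on such an edge to coincide; this is the reading the paper intends (compare Remark~\ref{rem:orthant_cone}, which asserts that under the positive loop property every edge along a loop carries a single label), and it is needed so that the length-$4$ SR-loops $R_j\ \text{---}\ S\ \text{---}\ R_k\ \text{---}\ S'\ \text{---}\ R_j$ are covered by the correspondence. With that convention made explicit, as you do, the equivalence is complete; your parenthetical about (G1) is harmless but not actually load-bearing, since both graphs are defined purely in terms of the nonzero entries of $N$.
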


\begin{proof}
	See \cite[Proposition 4.5]{angeli--deleenheer--sontag-2010}.	
	\mybox
\end{proof}

\begin{remark}\label{rem:orthant_cone}
When the R-graph has the positive loop property,  {any edge $\{R_j, R_k\}$ along a loop has only  one label. In other words, $L_R(\{R_j, R_k\})$ consists of one element,  and by abuse of notation we identify this set with its unique element.}   {W}e may  {then} associate an  {orthant}
	\[
		K = \{(x_1, \ldots, x_m) \in \r^m\,|\ \sigma_1x_1, \ldots, \sigma_mx_m \geqslant 0\}
	\]
	with the network by defining the {\em sign pattern} $\sigma = (\sigma_1, \ldots, \sigma_m) \in \{\pm 1\}^m$ as follows. First suppose the R-graph is connected. Set $\sigma_1 := 1$. For  {$i \in [m] \backslash \{1\}$,}  consider any simple path $1 = i_0 \ \text{---} \ i_1 \ \text{---} \ \cdots \ \text{---} \ i_k = i$ joining $1$ and $i$, then set
	\begin{equation}\label{eq:sigma_def_b}
		\sigma_i
		:= 
		\prod_{d = 1}^{k} L_R \left( \{R_{i_{d-1}}, R_{i_{d}}\} \right) \,.
	\end{equation}
	In view of the positive loop property, this definition does not depend on the choice of the path. Indeed, the union of the edges of any two simple paths joining $1$ and $i$ is a union of simple loops   { with the edges common in both paths}. The product of the labels of the edges of the two paths is thus $1$, hence the products of the labels of the edges of each of the two paths agree.

If $G_R$ is not connected, then we apply the procedure to each connected component, starting by setting $\sigma_i := 1$ for the smallest index $i \in [m]$ such that $R_i$ belongs to that component.
\mybox
\end{remark}

In what follows, given a reaction network $G$ such that its R-graph has the positive loop property, we will always assume that $\sigma = (\sigma_1, \ldots, \sigma_m)$ is the sign pattern defined above, and $K$ the corresponding  {orthant}.  {We note that the orthant depends implicitly on $\calR$, that is, on how we choose to represent the reactions of the network. In particular, the dimensionality of the orthant equals the cardinality of $\calR$.}

\begin{proposition}\label{prop:p1_or_p2}
	Let $G$ be a reaction network satisfying {\em (G1)--(G2)}. Suppose that the R-graph has the positive loop property, and the directed SR-graph is R-strongly connected. Let $N$ be the stoichiometric matrix, and $K$ be the  {orthant} given by the construction in Remark \ref{rem:orthant_cone}. Then, either
	\begin{itemize}
		\item[{\em (P1)}] $\kernel N \cap K = \{0\}$\,,
	\end{itemize}
	or
	\begin{itemize}
		\item[{\em (P2)}] $\kernel N \cap \interior K \neq \varnothing$\,.
	\end{itemize}
\end{proposition}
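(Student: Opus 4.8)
The plan is to analyze the convex cone $\kernel N \cap K$ through the supports of its elements, and to show that the graph hypotheses force every such support to be either $\varnothing$ or all of $[m]$; these two cases yield (P1) and (P2) respectively.

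First I would record the sign structure imposed by the positive loop property. For $x \in K$ put $y_i := \sigma_i x_i \geq 0$, so that $x_i = \sigma_i y_i$ and $\supp x = \{i \mid y_i > 0\}$. By (G1) the condition ``$S_i$ is part of $R_j$'' is equivalent to $N_{ij} \neq 0$, and by (G2) each species is part of at most two reactions. If $S_i$ is part of two reactions $R_j, R_k$, then it induces the edge $\{R_j, R_k\}$ of the R-graph, and the construction of the sign pattern in Remark \ref{rem:orthant_cone} gives $\sigma_j\sigma_k = -\sign(N_{ij}N_{ik})$, whence
\[
 \sign\bigl( (N_{ij}\sigma_j)(N_{ik}\sigma_k) \bigr) = \sign(N_{ij}N_{ik})\,\sigma_j\sigma_k = -1 ;
\]
that is, the two nonzero contributions $N_{ij}\sigma_j$ and $N_{ik}\sigma_k$ to the $i$-th row of $N$ have opposite signs. (When several species link the same pair $R_j, R_k$, the positive loop property applied through Proposition \ref{prop:posloop} to the resulting length-$4$ loops of the SR-graph guarantees that these sign relations are mutually consistent, so $\sigma$ is well defined.)

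Next I would read off the kernel relations. Let $x \in \kernel N \cap K$. For a species $S_i$ lying in two reactions $R_j, R_k$, the vanishing of $(Nx)_i = N_{ij}\sigma_j\,y_j + N_{ik}\sigma_k\,y_k$ together with the opposite signs forces $|N_{ij}|\,y_j = |N_{ik}|\,y_k$, so that $y_j > 0 \iff y_k > 0$; and for a species $S_i$ lying in a single reaction $R_j$ it forces $N_{ij}x_j = 0$, i.e. $y_j = 0$. Hence $\supp x$ is closed under the edges of $G_R$, i.e. it is a union of connected components of $G_R$, none of which contains a reaction having a species belonging to no other reaction. Since $\kernel N \cap K$ is a convex cone and there are finitely many coordinates, summing elements of the cone that realize each achievable nonzero coordinate yields an $x^\ast$ whose support $J^\ast := \supp x^\ast = \bigcup\{\supp x \mid x \in \kernel N \cap K\}$ is maximal; being a union of supports, $J^\ast$ is again a union of connected components of $G_R$.

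Finally I would invoke R-strong connectivity to pin down $J^\ast$. If $J^\ast = \varnothing$ then $\kernel N \cap K = \{0\}$ and (P1) holds. Otherwise, supposing for contradiction that $\varnothing \neq J^\ast \subsetneq [m]$, choose $R_p \notin J^\ast$ and $R_q \in J^\ast$. A directed path from $R_p$ to $R_q$ in $G^\rightarrow_{SR}$ alternates reaction and species vertices, so it contains a consecutive triple $R \to S_i \to R'$ with $R \notin J^\ast$ and $R' \in J^\ast$; both edges certify that $S_i$ is part of $R$ and of $R'$, so $\{R, R'\}$ is an edge of $G_R$ joining a component inside $J^\ast$ to one outside it, contradicting that $J^\ast$ is a union of components. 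Thus $J^\ast = [m]$, and then $\sigma_i x^\ast_i > 0$ for every $i$, i.e. $x^\ast \in \kernel N \cap \interior K$, giving (P2). I expect the sign computation of the first step to be the delicate part: it is precisely where the positive loop property aligns the orthant $K$ with the sign pattern of $N$, so that $\kernel N \cap K$ behaves like the cone of nonnegative flows on the R-graph; once that is in place, the connectivity step is just the statement that such a flow cannot be supported on an isolated part of the network.
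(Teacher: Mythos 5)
Your argument is correct. Note that the paper does not actually prove this proposition; it simply cites \cite[Lemma 6.1]{angeli--deleenheer--sontag-2010}, so your write-up is a self-contained reconstruction rather than a variant of anything in the text, and it supplies the right mechanism. The key identity $\sigma_j\sigma_k=L_R(\{R_j,R_k\})=-\sign(N_{ij}N_{ik})$ does follow from the path-independence established in Remark \ref{rem:orthant_cone} (append the edge $\{R_j,R_k\}$ to a simple path reaching $R_j$, or use the loop property if $R_k$ already lies on that path), and your parenthetical about length-$4$ SR-loops correctly disposes of the case of an edge with two labels. With that in place, (G1) converts ``is part of'' into $N_{ij}\neq 0$, (G2) makes every row of $Nx=0$ a two-term balance $|N_{ij}|y_j=|N_{ik}|y_k$ in the nonnegative variables $y_i=\sigma_i x_i$, so supports of elements of $\ker N\cap K$ are unions of connected components of $G_R$; closure of the convex cone under addition yields one element of maximal support $J^*$; and R-strong connectivity kills the case $\varnothing\neq J^*\subsetneq[m]$, since the first species vertex on a directed path at which the path enters $J^*$ witnesses an R-graph edge crossing the boundary of $J^*$. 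Every step checks out. What your route buys is an elementary, purely combinatorial proof that makes visible exactly where each hypothesis enters --- (G2) for the two-term rows, the positive loop property for aligning the orthant with the sign pattern of $N$, and R-strong connectivity for the all-or-nothing support --- whereas the paper buys brevity by outsourcing the whole statement to the cited lemma.
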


\begin{proof}
	See \cite[Lemma 6.1]{angeli--deleenheer--sontag-2010}.
	\mybox
\end{proof}

 {In view of the following theorem, if (G1)-(G4) hold, then checking whether the directed SR-graph of $G$ is R-strongly connected 
is reduced to checking the much simpler condition of whether the R-graph is connected. }

 {
\begin{theorem}\label{thm:dsrgraph}
	Let $G$ be a reaction network satisfying
{\em (G1)--(G4)}. Then the following statements are equivalent:
\begin{itemize}
\item  the directed SR-graph of $G$ is R-strongly connected, 
\item the SR-graph of $G$ is connected,
\item  { the R-graph of $G$ is connected}. 
\end{itemize}
\end{theorem}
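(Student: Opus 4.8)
The plan is to establish the three equivalences as a cycle of implications, namely (directed SR-graph R-strongly connected) $\Rightarrow$ (SR-graph connected) $\Rightarrow$ (R-graph connected) $\Rightarrow$ (directed SR-graph R-strongly connected). The first two implications are elementary and use only (G1) together with the bipartite structure and the minimality built into the definition of a network; the third implication is the crux, and it is the only place where conservativity (G3) and consistency (G4) are actually needed.

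For (R-strongly connected) $\Rightarrow$ (SR connected): a directed path between any two reaction vertices yields an undirected one, so all reaction vertices lie in a single connected component of $G_{SR}$; since under (G1) any species $S_i$ that is part of a reaction $R_j$ has $N_{ij}\neq 0$ and hence an edge to $R_j$ in $G_{SR}$, and since by minimality every species sits in some reaction, every species vertex attaches to that component, so $G_{SR}$ is connected. For (SR connected) $\Rightarrow$ (R connected): given two reactions, I would take a simple path between them in $G_{SR}$; because $G_{SR}$ is bipartite this path alternates species and reaction vertices and, starting and ending at reactions, reads $R_{j_0}, S_{i_1}, R_{j_1}, \ldots, R_{j_t}$, where each consecutive distinct pair $R_{j_{d-1}}, R_{j_d}$ shares the species $S_{i_d}$ with $N_{i_d j_{d-1}}, N_{i_d j_d}\neq 0$ and is therefore an edge of $G_R$; concatenating gives an R-graph path.

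The hard direction is (R connected) $\Rightarrow$ (R-strongly connected), and here is the main obstacle. In the directed SR-graph all edges are bidirectional except, by Remark~\ref{rem:SR_labeling}, the arcs $R_j \to S_i$ with $R_j$ irreversible and $S_i$ a product of $R_j$, which have no reverse; undirected connectivity must be upgraded to strong connectivity by routing around these one-way arcs, and this is precisely where a purely combinatorial argument fails and the flux hypotheses become essential. I would argue by contradiction: suppose the directed SR-graph is not R-strongly connected, pass to the condensation into strongly connected components, and fix a sink component $C$ (no arcs from $C$ to other components), with species and reaction vertex sets $\calS_C$ and $\calR_C$. Since the reactions do not all lie in one SCC, $\calR_C\subsetneq\calR$. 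Two structural facts follow from the sink property together with the edge description: (Fact~1) every species that is part of a reaction in $\calR_C$ lies in $\calS_C$, because $R_j\in\calR_C$ emits an arc $R_j\to S_i$ that cannot leave $C$; and (Fact~2) if a species $S_i\in\calS_C$ is part of a reaction $R'\notin\calR_C$, then $R'$ is irreversible and $S_i$ is a product of $R'$, since otherwise the arc $S_i\to R'$ would exist and leave $C$.

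The contradiction then comes from a single weighted flux balance. Let $c\in\rpluss^n$ be a positive conservation law (G3) and $v\in\ker N$ a flux with $v_j>0$ for every irreversible $R_j$ (G4). Summing $Nv=0$ over $\calS_C$ with weights $c_i$ gives
\[
0 \;=\; \sum_{S_i \in \calS_C} c_i\,(Nv)_i \;=\; \sum_{j=1}^{m} v_j \Big( \sum_{S_i \in \calS_C} c_i N_{ij} \Big).
\]
For $R_j\in\calR_C$, Fact~1 turns the inner sum into the full column sum $(c^\top N)_j=0$. For $R'\notin\calR_C$, Fact~2 forces every contributing species to be a product of $R'$, so the inner sum is a sum of strictly positive terms $c_i N_{iR'}>0$ while $R'$ is irreversible with $v_{R'}>0$; hence every external term is $\ge 0$. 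Finally, connectivity of $G_R$ (with $\varnothing\neq\calR_C\subsetneq\calR$, or, in the degenerate case $\calR_C=\varnothing$, the fact that the nonempty $\calS_C$ meets some necessarily external reaction) produces at least one external reaction having a product in $\calS_C$, which contributes a strictly positive term. The right-hand side is then strictly positive while the left-hand side is zero, the desired contradiction. I expect the only delicate points to be bookkeeping: confirming via Remark~\ref{rem:SR_labeling} that the sink-component analysis isolates exactly the product-arcs, and noting that the same computation covers $\calR_C=\varnothing$ since then every term of the sum is external.
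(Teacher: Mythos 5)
Your proposal is correct, and while the two easy implications coincide with the paper's observations, your argument for the hard direction (connectivity $\Rightarrow$ R-strong connectivity) is genuinely different from the paper's. The paper proceeds constructively: it first proves an auxiliary lemma (Lemma~\ref{lem:great}) showing, among other things, that under (G2) every species lies in exactly two reactions and that every minimal-support nonnegative conservation law is realized by a \emph{directed} simple loop in $G_{SR}^{\rightarrow}$; it then uses the extreme rays of $\Gamma^{\perp}\cap\r^n_{\geqslant 0}$ together with (G3) to thread a directed loop through every one-way arc $R_j\to S_i$, thereby upgrading each undirected path to a directed one. Your argument is instead a contradiction via a sink strongly connected component $C$ and the weighted flux balance $\sum_{S_i\in\calS_C}c_i(Nv)_i=0$: internal columns vanish by conservation, external columns contribute nonnegatively because (by the sink property and Remark~\ref{rem:SR_labeling}) they can only \emph{produce} species of $\calS_C$ via irreversible reactions with $v_{R'}>0$, and $G_R$-connectivity (or species minimality when $\calR_C=\varnothing$) supplies a strictly positive term. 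I checked the details and they hold; the only case you leave implicit, $\calS_C=\varnothing$ with $\calR_C\neq\varnothing$, cannot occur since Fact~1 together with $y\neq y'$ forces any reaction in $C$ to drag a species into $C$. What your route buys is brevity and generality: it never invokes (G2), so it establishes the equivalence under (G1), (G3), (G4) alone, and it avoids the extreme-ray machinery entirely. What the paper's route buys is finer structural information --- explicit directed loops realizing minimal-support conservation laws --- at the cost of a longer and more delicate sign-chasing argument that does depend on (G2).
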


The proof of the theorem is given in Section \ref{sec:proofs}.}
Recall that the flow of (\ref{eq:crnode}) is said to be {\em bounded-persistent} if $\omega(s_0) \cap \partial \rplus^n = \varnothing$ for each $s_0 \in \rpluss^n$, where
\[
	\omega(s_0) := \bigcap_{\tau \geqslant 0} \overline{\bigcup_{t \geqslant \tau} \{\sigma(t, s_0)\}}
\]
is the {\em omega-limit set} of $s_0$.

\begin{proposition}\label{prop:ads-2010-thm2}
	Let $G$ be a reaction network satisfying {\em (G1)-- {(G4)}} and {\em (r1)--(r3)}. Suppose that the flow of {\em (\ref{eq:crnode})} is bounded-persistent. Suppose, in addition, that the R-graph has the positive loop property and  {is connected}. 
	Then,
	\begin{itemize}
		\item[{\em(}i\,{\em)}] if {\em (P1)} holds, then there exists a Lebesgue measure-zero $D \subseteq \rpluss^n$ such that all solutions of {\em (\ref{eq:crnode})} starting in $\rpluss^n \backslash D$ converge to the set of equilibria, and
		
		\item[{\em(}ii\,{\em)}] if {\em (P2)} holds, then all solutions of {\em (\ref{eq:crnode})} starting in $\rpluss^n$ converge to an equilibrium. Furthermore, this equilibrium is unique within each stoichiometric compatibility class.
	\end{itemize}	
\end{proposition}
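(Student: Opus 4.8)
The plan is to recognize this as the generic-and-global convergence result of \cite{angeli--deleenheer--sontag-2010}, re-stated in the present (slightly more general) framework, and to verify that the monotone-systems machinery underlying it survives the relaxations. The core idea is to pass to \emph{reaction coordinates} and realize the flow of (\ref{eq:crnode}) as the affine image of a flow that is \emph{strongly monotone} with respect to the partial order induced by the orthant $K$ of Remark \ref{rem:orthant_cone}; Hirsch's generic convergence theorem for strongly monotone flows then supplies the conclusions, and the dichotomy of Proposition \ref{prop:p1_or_p2} decides between the two cases.

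First I would set up the reaction-coordinate picture. Writing $R(t) := \int_0^t r(s(\tau))\,d\tau$ for the vector of reaction extents, one has $s(t) = s_0 + NR(t)$, so that the state is recovered affinely from $R$ and $\dot R = r(s_0 + NR)$, a vector field on $\r^m$ with Jacobian $(\partial r/\partial s)\,N$. Assumption (r3), together with (G1), fixes the signs of the entries of $\partial r/\partial s$ (nonnegative for reactant species, nonpositive for product species, zero otherwise), and the positive loop property of the R-graph is precisely the combinatorial condition ensuring that the off-diagonal sign structure of $(\partial r/\partial s)\,N$ is consistent with the orthant $K$ built in Remark \ref{rem:orthant_cone}. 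Hence the reaction-coordinate flow preserves the order induced by $K$, i.e.\ it is monotone; this is the content of Proposition \ref{prop:posloop} translated through the e-loop/o-loop equivalence.

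Next I would upgrade monotonicity to \emph{strong} monotonicity in the interior. The off-diagonal sign pattern of $(\partial r/\partial s)\,N$ is a signed version of the connectivity structure of the SR-graph, and irreducibility of the relevant part of this matrix corresponds exactly to R-strong connectivity of the directed SR-graph, which by Theorem \ref{thm:dsrgraph} is equivalent to the assumed connectivity of the R-graph. Since (r3) gives strict inequalities on $\rpluss^n$, the Jacobian is strictly sign-definite in the interior, while bounded-persistence guarantees that $\omega(s_0) \subseteq \rpluss^n$ for each $s_0 \in \rpluss^n$, so the strong monotonicity is available exactly where omega-limit sets live; precompactness of trajectories (from (G3), recorded in Remark \ref{rem:collapse}) then lets Hirsch's theorem conclude that almost every interior trajectory approaches the set of equilibria, establishing (i) under (P1). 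For part (ii) I would use that (P2) produces a vector in $\kernel N \cap \interior K$, i.e.\ a strictly $K$-positive direction lying in $\kernel N$; in reaction coordinates this is a direction along which $R$ strictly increases in the order while $s = s_0 + NR$ is unchanged, which forbids a continuum of distinct equilibria within one stoichiometric class and forces each omega-limit set to collapse to a single equilibrium, unique in its compatibility class, whence every interior trajectory converges.

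The main obstacle I anticipate is not the monotone-systems input, which is imported wholesale, but the bookkeeping needed to confirm that the present relaxations relative to \cite{angeli--deleenheer--sontag-2010} --- permitting $y \to y'$ and $y' \to y$ simultaneously without collapsing reciprocal pairs, and the weakened hypotheses on the kinetic rates --- leave the sign-consistency computation behind monotonicity and the irreducibility computation behind strong monotonicity unchanged. The delicate point is that the directed SR-graph encodes reversible and irreversible reactions differently (cf.\ Remark \ref{rem:SR_labeling}), so one must check that treating a reversible reaction as a single vertex versus two irreversible vertices does not alter reachability between reaction vertices nor the induced orthant up to the choice of representation noted after Remark \ref{rem:orthant_cone}; once this is verified, every step above applies verbatim.
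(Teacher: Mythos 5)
Your proposal is correct and takes essentially the same approach as the paper: the paper's proof is a direct appeal to \cite[Corollary 1 and Theorem 2]{angeli--deleenheer--sontag-2010} combined with Theorem~\ref{thm:dsrgraph} to trade the assumed connectivity of the R-graph for the R-strong connectivity of the directed SR-graph required by those results, together with the observation that the orthant of Remark~\ref{rem:orthant_cone} coincides with the one used there. Your unpacking of the reaction-coordinate monotonicity, strong monotonicity via irreducibility, and the (P1)/(P2) dichotomy is a faithful sketch of the machinery behind the citation rather than a different route.
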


\begin{proof}
	 {The result follows from \cite[Corollary 1 and Theorem 2]{angeli--deleenheer--sontag-2010} after applying  Theorem~\ref{thm:dsrgraph} to replace the   condition that the R-graph is connected by the R-strong connectivity of the directed SR-graph}. The  {orthant} constructed from the R-graph in Remark \ref{rem:orthant_cone} is the same as the  {orthant} given by \cite[Corollary 1]{angeli--deleenheer--sontag-2010}.
	
	\mybox	
\end{proof}

 {%
\begin{remark}
In view of Proposition \ref{prop:p1_or_p2} and Theorem~\ref{thm:dsrgraph}, Proposition \ref{prop:ads-2010-thm2} covers all possible scenarios because either (P1) or (P2) occur with the hypotheses of the proposition. 
 {Leaving the technicalities aside,  case (i) of the proposition states that almost all trajectories will approach an equilibrium point over time and hence cannot  escape to infinity (in any direction). However, there might be multiple equilibria even within a single stoichiometric compatibility class. Case (ii) states that there is a unique equilibrium in each of these classes and that all trajectories converge to it. Hence the conclusion in case (ii) is stronger than that in case (i).}
\mybox	
\end{remark}}

 {
\begin{remark} \label{rmk:RvsSR}
 By invoking     Proposition \ref{prop:posloop}  {and Theorem~\ref{thm:dsrgraph}}, we might replace the graphical conditions in  Proposition \ref{prop:ads-2010-thm2} with the conditions ``all simple loops in the SR-graph are e-loops" and the SR-graph is connected.  {The proofs of our reduction results are based on the invariance of the conditions on the SR-graph, which makes the proofs simpler. However, in specific examples, it might be simpler to check the conditions directly on the R-graph.}
\mybox	
\end{remark}}

 {
\begin{remark} \label{rk:G4}
	By \cite[Theorem 1]{angeli--deleenheer--sontag-2007c}, a conservative and bounded-persistent reaction network is consistent. This implies that assumption (G4) in Proposition \ref{prop:ads-2010-thm2} is redundant provided the other assumptions of the proposition hold. We have chosen to include it as an assumption, because our reduction results on the graphical properties require (G4) to hold, but do not require the flow to be bounded-persistent.
	\mybox	
\end{remark}}

\section{Main Results}\label{sec:intermediates}

This work is essentially about how the graphical conditions for  {generic and global} convergence to equilibria reviewed in Proposition \ref{prop:ads-2010-thm2} are invariant under the removal of so-called intermediates. However, the removal of intermediates in the sense they are typically defined in the reaction network literature \cite{feliu--wiuf-2013} often gives rise to auto-catalytic reactions, which are not allowed in our formalism because of (\Gnac). As we will see, if the problematic species were to appear only in the reactions emerging from the removal of intermediates, and with the same stoichiometric coefficients in both reactant and product sides, then they could be simply ``cancelled out,'' so that the network obtained from their removal still satisfies (\Gnac). 

We begin this section by giving a formal description of the procedure of removal of intermediates. We state our main results in Subsection \ref{subsec:main_result}, and discuss several examples from the literature in Subsection \ref{subsec:examples}. In Subsection \ref{subsec:exceptions}, we briefly contrast our working definition of intermediates with other variants in the literature, giving some examples and counterexamples motivating our choices in this context.

\subsection{Removal of Intermediates}\label{subsec:intermediates}

Let $G = (\calS, \calC, \calR)$ be a reaction network.
For each $y =  {(\alpha_1, \ldots, \alpha_n) \in \r^n}$, we  {define its support  as }
\[
	\supp y := \{S_i \in \calS\,|\ \alpha_i \neq 0\}\,.
\]

A species $Y \in \calS$ is  called an \emph{intermediate} of $G$ if the following two properties hold:

\begin{itemize}
	\item[(I1)] $Y \in \calC$, and $\supp y \cap \supp Y = \varnothing$ for every complex $y \in \calC \backslash \{Y\}$.
	
	\item[(I2)] There exist 
	$y = \alpha_1S_1 + \cdots + \alpha_nS_n$ and $y' = \alpha'_1S_1 + \cdots + \alpha'_nS_n$ in $\calC \backslash \{Y\}$, $y \neq y'$, such that
		\begin{itemize}
			\item[({\em i}\,)] either $y \longrightarrow Y$ or $y \longleftrightarrow Y$ is a reaction in $\calR$,
			
			\item[({\em ii}\,)] either $Y \longrightarrow y'$ or $Y \longleftrightarrow y'$ is a reaction in $\calR$,
			
			\item[({\em iii}\,)] $\displaystyle \sum_{S_i \in \calE} \alpha_iS_i = \sum_{S_i \in \calE} \alpha'_iS_i =: e$, where $\calE := \supp y \cap \supp y'$, and
		\end{itemize}
\end{itemize}

 {Condition (I1) states that $Y$ does not react with any other species, and the first two items of (I2) state that $Y$ is produced in  
 one reaction and consumed in 
 one reaction. Condition (I2)(iii) imposes that any species  appearing  in $y$ and $y'$ does so with the same stoichiometric coefficient. } 

 {When (G2) holds, then  reversible reactions in (I2)(i)-(ii) cannot be considered as two irreversible reactions and further, $y$ and $y'$ are uniquely determined. Additionally, if $\supp e$ is not empty, then  (\Hzero) implies that no species in $\supp e$ can take part in any other reaction in $G$ besides $y \ \text{---} \ Y$ and $Y \ \text{---} \ y'$, where the dash `---' is a placeholder for `$\longrightarrow$' or `$\longleftrightarrow$.' }

If (I1) and (I2) hold, then we may construct a reaction network $G^* = (\calS^*, \calC^*, \calR^*)$, called the {\em reduction of $G$ by the removal of the intermediate $Y$},  {by collapsing the reaction paths going through $Y$, and cancelling out any emerging ``catalysts.'' 
}
 {Although the construction of the reduced network can be defined in general \cite{feliu--wiuf-2013}, it becomes  simpler under assumption (G2). Since  we  require this assumption to hold in Theorem \ref{thm:main1} and \ref{thm:main2} below, we give the construction under this assumption. }

Specifically,   we define $\calR^* := \calR^*_c \cup \calR^*_Y$, where $\calR^*_c$ is identified with the subset of $\calR$ of reactions not involving the complex $Y$, and
	\[
		\calR^*_Y 
		:=
		\left\{
			\begin{array}{rl}
				\{y - e \longleftrightarrow y' - e\}\,, & \quad \text{if} \ y \longleftrightarrow Y, Y \longleftrightarrow y' \in \calR \\[1ex]
				\{y - e \longrightarrow y' - e\}\,, & \quad \text{if} \ y \longrightarrow Y \in \calR \ \text{or} \ Y \longrightarrow y' \in \calR \,.
			\end{array}
		\right.
	\]
	 {Here, `$-$' in $y - e$, and so on, denotes linear subtraction in $\r^n$.}
	We set $\calC^*$ to be the set of reactant and product complexes in the reactions in $\calR^*$, and $\calS^*$ is the set of species that are part of some complex in $\calC^*$. In the above description, we think of the reactant and product sides of a reaction in $\calR$ or $\calR^*$ as the formal linear combinations of participating species. Note that $\supp e$ might be empty and  {that  no species in $\supp e$ is present in $G^*$.}

 {
For example, in the one-site phosphorylation mechanism (\ref{eq:one_site}) from the introduction, $S_0E$ is an intermediate. It can be removed as follows. The  reaction path through $S_0E$ is $S_0 + E \longrightarrow S_0E \longrightarrow S_1 + E$. We first collapse the path into $S_0 + E \longrightarrow S_1 + E$, and then cancel out the emerging catalyst $E$. This yields  the reaction $S_0 \longrightarrow S_1$; see (\ref{eq:reduced_one_site}).}

 { Since $Y$ only appears in the two reactions involving  $y$ and $y'$, the removal of $Y$ does not affect the other reactions in the network. }  {The contracted reaction might already exist in $\calR^*_c$, or one or both of the irreversible reactions might likewise be in $\calR^*_c$.  For example, the one-site phosphorylation mechanism with reversible reactions $S_0+E\longleftrightarrow S_0E\longleftrightarrow S_1+E$, coupled with spontaneous dephosphorylation $S_1\longrightarrow S_0$, leads to the network $S_0\longleftrightarrow S_1$, $S_1\longrightarrow S_0$, upon reduction by $S_0E$. }

\begin{remark}\label{rem:seven}
 {The removal of an intermediate does not break any of the properties  (\Gnac)--(\consistency).}
	Indeed, $\supp(y - e) \cap \supp(y' - e) = \varnothing$ by construction, so $G^*$ satisfies (\Gnac) whenever $G$ does. Furthermore, it follows directly from the construction that no species in $G^*$ can take part in more than two reactions, that is, (\Hzero) also holds for $G^*$, as long as it already did for $G$.   
	
	It follows from \cite[Theorems 1 and 2]{mmf--feliu--wiuf-2015} that (\Nc)  {and (\consistency)} are preserved by the removal of an intermediate in the sense  { presented here}, which can be seen as a special case of the iterative removal of sets of intermediates or catalysts in the sense of \cite{mmf--feliu--wiuf-2015}. We omit the details;  see also Remark \ref{rem:collapse}.
\mybox	
\end{remark}

Given $Y_1, \ldots, Y_p \in \calS$, set $G_p = (\calS_p, \calC_p, \calR_p) := G$, and suppose that, for $j = p, \ldots, 1$, we recursively have $Y_1, \ldots, Y_j \in \calS_{j}$, that the species $Y_j$ is an intermediate of $G_{j}$, and then define $G_{j-1} = (\calS_{j-1}, \calC_{j-1}, \calR_{j-1})$ to be the reaction network obtained from $G_{j}$ by the removal of the intermediate $Y_j$.
	The reaction network $G_0$ obtained in the above construction is referred to as the {\em reduction of $G$ by the successive removal of intermediates $Y_p, \ldots, Y_1$.}

\begin{remark}\label{rem:ordering_irrelevance}
	The network $G_0$ does not depend on the order by which the intermediates $Y_p, \ldots, Y_1$ are removed.   {This follows from \cite[Theorem 3]{mmf--feliu--wiuf-2015} under assumption (G2) (see Remark \ref{rem:seven} also).  
	This observation further implies that
	 there is a unique minimal network obtained by iteratively removing intermediates (and cancelling catalysts) until this is no longer possible. Note, however, that $Y_j$ is an intermediate of $G_j$ and not necessarily of $G_p$; hence the set of intermediates to remove is not determined directly by $G$. }
	\mybox
\end{remark}

\begin{example}[The RKIP Network]\label{ex:RKIP}
	Consider the {\em RKIP network} discussed in \cite[Example 2]{angeli--deleenheer--sontag-2010}, displayed below in slightly modified notation as equations (\ref{eq:RKIP_1of3})--(\ref{eq:RKIP_3of3}).
		\begin{equation}\label{eq:RKIP_1of3}
			\begin{array}{c}
				R + K \longleftrightarrow RK
			\end{array}
		\end{equation}
		\begin{equation}\label{eq:RKIP_15of3}
			\begin{array}{c}
				RK + E_p \longleftrightarrow RKE_p \longrightarrow R + K_p + E
			\end{array}
		\end{equation}
		\begin{equation}\label{eq:RKIP_2of3}
			\begin{array}{c}
				M_p + E \longleftrightarrow M_pE \longrightarrow M_p + E_p
			\end{array}
		\end{equation}
		\begin{equation}\label{eq:RKIP_3of3}
			\begin{array}{c}
				K_p + P \longleftrightarrow K_pP \longrightarrow K + P
			\end{array}
		\end{equation}
 {This network fulfils hypotheses (G1)-(G4). } The reaction network obtained by the removal of the intermediate $M_pE$  { for which $e=M_p$} consists of (\ref{eq:RKIP_1of3}), (\ref{eq:RKIP_15of3}), (\ref{eq:RKIP_3of3}), plus the reaction $E \longrightarrow E_p$ ($M_p$ is cancelled out upon the removal of $M_pE$). We may further remove the intermediate $RKE_p$, then the intermediate $K_pP$ (leading to $e=P$ being also cancelled out), eventually obtaining
			\[
				\begin{array}{rrcl}
					R_1^*\colon& R + K &\longleftrightarrow& RK \\[1ex]
					R_2^*\colon& RK + E_p &\longrightarrow& R + K_p + E \\[1ex]
					R_3^*\colon& E &\longrightarrow& E_p \\[1ex]
					R_4^*\colon& K_p &\longrightarrow& K
				\end{array}
			\]
		as the reduced reaction network.
\mydiamond
\end{example}

\begin{example}[Single-Phosphorylation Mechanism]\label{ex:one_site}
	Consider the one-site phosphorylation mechanism (\ref{eq:one_site}) discussed in the introduction. The reaction network obtained by the successive removal of intermediates $S_0E$ and $S_1F$ is given by
\[
	R_1^*\colon\quad S_0 \longrightarrow S_1
	\quad\quad\quad
	R_2^*\colon\quad S_1 \longrightarrow S_0\,.
\]
We emphasize that, in our formalism, the reduced network consists of the two reactions $R_1^*$ and $R_2^*$, and not of the single reversible reaction $S_0 \longleftrightarrow S_1$.
\mydiamond
\end{example}

\subsection{Invariance under the  {Successive Removal of Intermediates}}\label{subsec:main_result}

\begin{theorem}\label{thm:main1}
	Let $G$ be a reaction network satisfying {\em (\Gnac)-- { (\consistency)}}. Suppose $G^*$ is a reaction network obtained from $G$ by the successive removal of intermediates. Then $G^*$  {also satisfies} {\em (\Gnac)-- { (\consistency)}} and, furthermore,
	\begin{itemize}
		\item[{\em (}i\,{\em )}] the  R-graph of $G^*$ is   connected  { if and only if} the  R-graph of $G$ is  connected, and
		
		\item[{\em (}ii\,{\em )}] the R-graph of $G^*$ has the positive loop property  { if and only if} the R-graph of $G$ has the positive loop property.
	\end{itemize}
\end{theorem}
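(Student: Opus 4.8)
The plan is to reduce everything to the removal of a single intermediate and then to a purely combinatorial statement about the SR-graph. First I would invoke Remark~\ref{rem:ordering_irrelevance} to reduce, by induction on the number $p$ of removed intermediates, to the case $p=1$: it suffices to show that removing one intermediate $Y$ preserves each of the two graphical properties, since $G^*$ is obtained by iterating such removals and, by Remark~\ref{rem:seven}, (\Gnac)--(\consistency) hold at every stage (this also settles the first assertion, that $G^*$ satisfies (\Gnac)--(\consistency)). Next, because $G$ and $G^*$ both satisfy (\Gnac)--(\consistency), Theorem~\ref{thm:dsrgraph} lets me replace ``the R-graph is connected'' by ``the SR-graph is connected'', and Proposition~\ref{prop:posloop} lets me replace ``the R-graph has the positive loop property'' by ``every simple loop of the SR-graph is an e-loop''. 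So the whole theorem follows once I prove that connectedness of the SR-graph and the all-loops-are-e-loops property are each invariant under removing a single intermediate (cf.\ Remark~\ref{rmk:RvsSR}).

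The second step is to describe precisely how the SR-graph changes. Writing $R_a$ for the reaction $y \,\text{---}\, Y$ and $R_b$ for $Y \,\text{---}\, y'$, and $\calE = \supp y \cap \supp y'$ for the cancelled catalysts, the reduction deletes the species vertex $Y$ and the species vertices in $\calE$, and replaces the two reaction vertices $R_a, R_b$ by the single vertex $R^*$ of the contracted reaction $y - e \,\text{---}\, y' - e$. Using (\Gnac)--(\Gmon) and the remark following (I2), the vertex $Y$ and each vertex $S \in \calE$ have degree exactly two in the SR-graph, being adjacent to $R_a$ and $R_b$ only. A direct computation of the entries of $N$ shows that the edge $\{S_i, R^*\}$ inherits the label of $\{S_i, R_a\}$ when $S_i \in \supp y \setminus \calE$ and of $\{S_i, R_b\}$ when $S_i \in \supp y' \setminus \calE$, while all edges not meeting the region $\{R_a, R_b, Y\} \cup \calE$ are untouched. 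Thus the SR-graph of $G^*$ is obtained from that of $G$ by contracting the connected subgraph on $\{R_a, Y, R_b\}$ to $R^*$ and then deleting the vertices of $\calE$, which have become pendant (degree one) at $R^*$.

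For connectedness this is immediate: contracting a connected subgraph and deleting pendant vertices neither creates nor destroys connectedness, and both operations reverse in the relevant sense (expanding the internally-connected subgraph $R_a \,\text{---}\, Y \,\text{---}\, R_b$ and re-attaching the pendant catalysts keeps a connected graph connected), so the SR-graph of $G^*$ is connected if and only if that of $G$ is.

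For the loop condition I would set $\epsilon(\Lambda) := \bigl(\prod_{\text{edges}} L_{SR}\bigr)(-1)^{\lambda}$ for a simple loop $\Lambda$ of length $2\lambda$, so that $\Lambda$ is an e-loop exactly when $\epsilon(\Lambda) = 1$, and track $\epsilon$ arc by arc. The key computation is that a sub-path of a loop running through the region contributes the same factor to $\epsilon$ before and after reduction: an arc $A \,\text{---}\, R_a \,\text{---}\, B$ or $A \,\text{---}\, R_b \,\text{---}\, B$ staying on one side maps to $A \,\text{---}\, R^* \,\text{---}\, B$ with identical length and label-product, while a crossing arc $A \,\text{---}\, R_a \,\text{---}\, (Y\text{ or }S) \,\text{---}\, R_b \,\text{---}\, B$ (length $4$, label-product $+1$) maps to $A \,\text{---}\, R^* \,\text{---}\, B$ (length $2$, label-product $-1$), and $(+1)(-1)^2 = (-1)(-1)^1$, so the $\epsilon$-factor is unchanged. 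Two bookkeeping points remain, and the second is the main obstacle. First, the purely internal loops $Y \,\text{---}\, R_a \,\text{---}\, S \,\text{---}\, R_b \,\text{---}\, Y$ and $S \,\text{---}\, R_a \,\text{---}\, S' \,\text{---}\, R_b \,\text{---}\, S$ (with $S, S' \in \calE$) exist only in the SR-graph of $G$; a short computation shows they are always e-loops, so they never violate the condition. Second, a single simple loop of the SR-graph of $G$ may visit the region twice, once through $R_a$ and once through $R_b$; upon contraction $R^*$ would be used twice, so the image is not a simple loop but a figure-eight. I would resolve this by decomposing the figure-eight into the two simple loops $C_1, C_2$ of the SR-graph of $G^*$ that share only $R^*$, and observing that $\epsilon$ is multiplicative over this decomposition, $\epsilon(\Lambda) = \epsilon(C_1)\,\epsilon(C_2)$; hence $\Lambda$ is an e-loop whenever $C_1$ and $C_2$ are. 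Running the correspondence in both directions (and handling the straightforward lift of a through-loop of the SR-graph of $G^*$ back to that of $G$) then yields that all simple loops of the SR-graph of $G^*$ are e-loops if and only if all those of $G$ are, which completes the proof.
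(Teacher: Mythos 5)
Your proposal is correct and follows the same overall route as the paper: reduce to the removal of a single intermediate, pass from the R-graph to the SR-graph via Theorem~\ref{thm:dsrgraph} and Proposition~\ref{prop:posloop}, observe that the SR-graph of $G^*$ is the SR-graph of $G$ with the segment $R_Y\,\text{---}\,Y\,\text{---}\,R_Y'$ contracted to $R^*$ and the catalyst vertices deleted, and then compare loops across this contraction. The one substantive difference is in part (ii): the paper asserts a one-to-one correspondence between simple loops of $G^*_{SR}$ through $R^*$ and simple loops of $G_{SR}$ through $R_Y$, $Y$ or $R_Y'$, and checks the e-loop property along that correspondence using Lemma~\ref{lemm:eloops}; your argument instead tracks the sign invariant $\epsilon$ arc by arc and, in doing so, explicitly treats two families of loops of $G_{SR}$ that are \emph{not} in the image of the paper's correspondence --- the internal loops confined to $\{Y\}\cup\calE\cup\{R_Y,R_Y'\}$ (which you verify are always e-loops), and the simple loops that visit both $R_Y$ and $R_Y'$ through species outside $\{Y\}\cup\calE$, whose contraction is a figure-eight rather than a simple loop and which you handle by multiplicativity of $\epsilon$ over the two simple loops sharing $R^*$. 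This extra bookkeeping is not idle: such loops genuinely occur under (G1)--(G4) (e.g.\ $A+B\,\text{---}\,Y\,\text{---}\,C+D$ together with $C\longrightarrow A$ and $D\longrightarrow B$), so your version closes cases the paper's stated bijection passes over, at the cost of a slightly longer case analysis. Your treatment of connectedness and of the preservation of (\Gnac)--(\consistency) via Remark~\ref{rem:seven} matches the paper's.
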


Furthermore, if these two graphical conditions are met, then (P1) and (P2) are invariant under the   {successive removal of} intermediates.
	
\begin{theorem}\label{thm:main2}
	Let $G$ be a reaction network satisfying {\em (\Gnac)-- {(\consistency)}}. Suppose $G^*$ is a reaction network obtained from $G$ by the successive removal of intermediates. Suppose, in addition, that the R-graph of $G^*$ has the positive loop property. 
		Let $N$ and $N^*$ be the stoichiometric matrices of $G$ and $G^*$, respectively, and $K$ and $K^*$ the  {orthants} constructed in Remark \ref{rem:orthant_cone} from the R-graphs of $G$ and $G^*$, respectively. Then,
	\[
		\ker N \cap K = \{0\}
		\quad
		\Leftrightarrow
		\quad
		\ker N^* \cap K^* = \{0\}\,,
	\]
	and
	\[
		\ker N \cap \interior K \neq \varnothing
		\quad
		\Leftrightarrow
		\quad
		\ker N^* \cap \interior K^* \neq \varnothing\,.
	\]
\end{theorem}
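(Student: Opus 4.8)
The plan is to reduce the statement to the removal of a single intermediate and then produce an explicit linear isomorphism $\ker N \to \ker N^*$ that is compatible with the two orthants. Since $G^*$ arises by a \emph{successive} removal of intermediates, I would argue by induction on the number of intermediates removed. By Remark~\ref{rem:seven} every intermediate network still satisfies (\Gnac)--(\consistency), and by Theorem~\ref{thm:main1}(ii) the positive loop property of the R-graph is preserved at every stage, so the orthant of Remark~\ref{rem:orthant_cone} is defined throughout. Hence it suffices to prove the two equivalences when $G^*$ is obtained from $G$ by removing a single intermediate $Y$; Remark~\ref{rem:ordering_irrelevance} makes the order immaterial.

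For the single step, (\Gmon) forces $Y$ to occur in exactly two reactions, $R_a\colon y \,\text{---}\, Y$ and $R_b\colon Y \,\text{---}\, y'$, which are replaced by the single reaction $R_c\colon y-e \,\text{---}\, y'-e$, while every other reaction is carried over unchanged into $\calR^*_c$. Reading each column of the stoichiometric matrix as (product)$-$(reactant) and identifying the complex $Y$ with its coordinate vector, one has $N_{\cdot a}=Y-y$, $N_{\cdot b}=y'-Y$ and $N^*_{\cdot c}=(y'-e)-(y-e)=y'-y$, so that $N^*_{\cdot c}=N_{\cdot a}+N_{\cdot b}$. Since $Y$ occurs only in $R_a,R_b$ with $N_{Y,a}=1$ and $N_{Y,b}=-1$, the row of $N$ indexed by $Y$ forces $v_a=v_b$ for every $v\in\ker N$; and by (I2)(iii) together with (\Gmon) the species of $\calE=\supp e$ occur only in $R_a,R_b$ and cancel in $R_c$. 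I would then define $\Psi\colon\ker N\to\r^{m-1}$ by keeping the coordinates of the carried-over reactions and setting $(\Psi v)_c:=v_a=v_b$, with inverse the lift $v^*\mapsto v$ given by $v_a=v_b:=v^*_c$. Verifying $\Psi(\ker N)=\ker N^*$ is then an immediate check of the rows indexed by $Y$, by $\calE$, and by the remaining species, all reducing to the column identity $N^*_{\cdot c}=N_{\cdot a}+N_{\cdot b}$.

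Next I would match the orthants. The edge $\{R_a,R_b\}$ of the R-graph of $G$ carries the single label $+1$ (from $Y$, since $-\sign(N_{Y,a}N_{Y,b})=+1$, and likewise from each species of $\calE$), and passing from $G$ to $G^*$ is precisely the contraction of this $+1$-edge: the neighbours of $R_c$ are exactly the former $\calR^*_c$-neighbours of $R_a$ or of $R_b$, and the labels are inherited because $\sign N^*_{\cdot c}$ agrees with $\sign N_{\cdot a}$ on $\supp y\setminus\calE$ and with $\sign N_{\cdot b}$ on $\supp y'\setminus\calE$. Consequently one may choose the sign patterns so that $\sigma^*_k=\sigma_k$ for every carried-over reaction and $\sigma^*_c=\sigma_a=\sigma_b$. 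With this choice, and because $(\Psi v)_c=v_a=v_b$, the defining (non)strict inequalities of $K$ and $K^*$ match coordinate by coordinate, the two conditions at $a$ and $b$ collapsing to the single condition at $c$. Thus $\Psi$ carries $\ker N\cap K$ onto $\ker N^*\cap K^*$ and $\ker N\cap\interior K$ onto $\ker N^*\cap\interior K^*$; being a linear bijection fixing the origin, it yields both equivalences.

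The hard part will be that Remark~\ref{rem:orthant_cone} fixes the sign pattern by a global convention (value $+1$ at the smallest index of each component), which after relabelling need not coincide with the compatible pattern used above. To remove this ambiguity I would observe that reactions in distinct connected components of the R-graph share no species, so $N$ is block diagonal along the components and $K=\prod_C K^{(C)}$; hence both (P1) and (P2) are tested component by component, and negating the sign pattern on an entire component preserves the subspace $\ker N^{(C)}$ and therefore leaves the two conditions unchanged. This shows the truth of $\ker N\cap K=\{0\}$ and of $\ker N\cap\interior K\neq\varnothing$ is independent of the admissible choice of sign pattern, so we are free to use the compatible one. This convention-independence, rather than the kernel bookkeeping, is the genuine obstacle; once it is in place, the single-step result together with the induction of the first paragraph proves the theorem.
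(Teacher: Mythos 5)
Your proposal is correct and follows essentially the same route as the paper: reduce by induction to the removal of a single intermediate, identify $\ker N$ with $\ker N^*$ via the collapse $v_a=v_b\mapsto v^*_c$ coming from the column identity $N^*_{\cdot c}=N_{\cdot a}+N_{\cdot b}$, and match the orthants by observing that the R-graph of $G^*$ is the contraction of the $+1$-labelled edge $\{R_a,R_b\}$, with the positive loop property forcing the labels (hence sign patterns) to agree. The only cosmetic difference is your final paragraph on convention-independence of the sign pattern, which the paper avoids by simply ordering the reactions so that the two $Y$-reactions carry the largest indices, making the base vertices of Remark~\ref{rem:orthant_cone} coincide for $G$ and $G^*$; your component-wise negation argument is a valid alternative resolution of the same point.
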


In view of Theorem \ref{thm:main1}, the graphical hypotheses on the R-graph in Proposition \ref{prop:ads-2010-thm2} for $G$ can be checked in $G^*$. And in view of Theorem \ref{thm:main2}, if these hypotheses are satisfied, then (P1) and (P2) can also be checked in $G^*$. The hypothesis of bounded-persistence in Proposition \ref{prop:ads-2010-thm2} can be checked using the graphical conditions in \cite{angeli--deleenheer--sontag-2007c}.  As shown in \cite{mmf--feliu--wiuf-2015}, these graphical conditions for bounded-persistence can also be checked in $G^*$ {---one need only to decouple the reversible reactions in order to apply the formalism in \cite{mmf--feliu--wiuf-2015} (see also \cite[pp. 612--615]{angeli--deleenheer--sontag-2010})}.  {Recall from Remark~\ref{rk:G4} that bounded-persistence together with (G3) imply (G4)}. We have thus devised a method to study the  {qualitative properties of generic and  global} convergence  {to equilibria } of a reaction network by analyzing a reduced network associated with it.  { In particular, we have the following corollary of Theorems \ref{thm:main1} and \ref{thm:main2}, and Proposition \ref{prop:ads-2010-thm2}.}

 {%
\begin{corollary}\label{cor:lets_refer_to_it}
Let $G$ be a bounded-persistent reaction network satisfying {\em (\Gnac)--(\consistency)} and {\em (\rone)--(\rthree)}, and let $G^*$ be a reaction network obtained from $G$ by the successive removal of intermediates. If the R-graph of $G^*$ is connected and  has the positive loop property, 
and $\ker N^* \cap \interior K^* \neq \varnothing$, then each stoichiometric compatibility class of $G$ has a unique equilibrium to which all positive solutions converge.
\end{corollary}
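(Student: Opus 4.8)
The plan is to assemble the corollary directly from Theorems~\ref{thm:main1} and~\ref{thm:main2} together with Proposition~\ref{prop:ads-2010-thm2}, using the successive removal of intermediates as a bridge that lets us import the hypotheses stated for $G^*$ back onto $G$, and then invoke the convergence result for $G$ itself.

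First I would establish that $G$ enjoys the two graphical conditions. Since $G$ satisfies (\Gnac)--(\consistency) by hypothesis, Theorem~\ref{thm:main1} applies: part~(i) gives that the R-graph of $G$ is connected because the R-graph of $G^*$ is, and part~(ii) gives that the R-graph of $G$ has the positive loop property because the R-graph of $G^*$ does. The preamble of the same theorem also confirms that $G^*$ again satisfies (\Gnac)--(\consistency), so that the orthants $K$ and $K^*$ of Remark~\ref{rem:orthant_cone} are both well defined.

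Next I would transfer the kernel condition. Since the R-graph of $G^*$ has the positive loop property, Theorem~\ref{thm:main2} yields the equivalence $\ker N \cap \interior K \neq \varnothing \Leftrightarrow \ker N^* \cap \interior K^* \neq \varnothing$. The right-hand side holds by hypothesis, hence $\ker N \cap \interior K \neq \varnothing$; that is, (P2) holds for $G$. (Note that this bypasses the dichotomy of Proposition~\ref{prop:p1_or_p2}, since Theorem~\ref{thm:main2} hands us (P2) outright.) At this stage $G$ satisfies (\Gnac)--(\consistency) and (\rone)--(\rthree), the flow is bounded-persistent by hypothesis, the R-graph of $G$ is connected with the positive loop property, and (P2) holds, so Proposition~\ref{prop:ads-2010-thm2}(ii) applies and produces the asserted conclusion: every solution starting in $\rpluss^n$ converges to an equilibrium that is unique within its stoichiometric compatibility class.

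Honestly there is no deep obstacle here, as the mathematical content lives in the cited theorems; the proof is an assembly. The only points requiring care are bookkeeping of hypotheses: confirming that (\Gnac)--(\consistency) genuinely propagate to $G^*$ (so that $K^*$ exists and the equivalence in Theorem~\ref{thm:main2} is meaningful), and that the positive-loop-property premise of Theorem~\ref{thm:main2} is exactly the one supplied for $G^*$ in the corollary. A final routine check is that ``all positive solutions converge to an equilibrium, unique within each compatibility class'' is the same assertion as ``each compatibility class has a unique equilibrium to which all positive solutions converge'': this holds because $\rpluss^n$ is forward invariant and the trajectories are precompact (Remark~\ref{rem:collapse}), so each positive solution stays in a fixed stoichiometric compatibility class and its limit is that class's unique equilibrium.
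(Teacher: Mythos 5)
Your proposal is correct and is exactly the argument the paper intends: the corollary is stated as an immediate consequence of Theorems~\ref{thm:main1} and~\ref{thm:main2} together with Proposition~\ref{prop:ads-2010-thm2}, and your assembly (transfer connectedness and the positive loop property to $G$ via Theorem~\ref{thm:main1}, transfer the kernel condition via Theorem~\ref{thm:main2} to obtain (P2), then invoke Proposition~\ref{prop:ads-2010-thm2}(ii)) is precisely that chain. The hypothesis bookkeeping you flag is handled correctly, so nothing further is needed.
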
}

The proofs of Theorems \ref{thm:main1} and \ref{thm:main2} will be given in Section \ref{sec:proofs}. We first illustrate the results with a few examples.  {We will see that the flexibility in treating a reversible reaction as two irreversible reactions is essential for Theorem \ref{thm:main2} to be true.}

\subsection{Examples}\label{subsec:examples}

\begin{example}[The RKIP Network]\label{ex:RKIP_revisited}
	Consider the RKIP network discussed in Example \ref{ex:RKIP}, which was reduced by the successive removal of intermediates $M_pE$, $RKE_p$ and $K_pP$. The R-graph of the reduced network is shown in Figure \ref{fig:r_of_reduced}. One can readily see that it has the positive loop property  {and is connected. 
		We conclude via Theorem \ref{thm:main1} that the R-graph of the original RKIP network has the same properties.}
	
	The R-graph of the reduced network yields, via Remark \ref{rem:orthant_cone}, the  {orthant} $K^* \! := \rplus^4$. Furthermore, each of the species $R, K, RK, E_p, E, K_p$ appears in exactly two reactions, once as a reactant, once as a product, both times with stoichiometric coefficient $1$. Therefore, $(1, 1, 1, 1) \in \kernel N^*$, showing that $\kernel N^* \cap \interior K^* \neq \varnothing$. It follows from Theorem \ref{thm:main2} that $\kernel N \cap \interior K \neq \varnothing$, where $N$ is the stoichiometric matrix of the original RKIP network, and $K$ is the  {orthant} obtained for its R-graph via Remark \ref{rem:orthant_cone}.
	
	The property of bounded-persistence for the flow of $G$ can also be checked directly on $G^*$ (see \cite[Theorems 1 and 2]{mmf--feliu--wiuf-2015} and Remark \ref{rem:collapse}).
	
	We conclude via  {Corollary \ref{cor:lets_refer_to_it}} that, under kinetic assumptions (\rone)--(\rthree), the RKIP network from Example \ref{ex:RKIP} has that property that each stoichiometric compatibility class has a unique equilibrium to which all trajectories starting with strictly positive concentrations converge.
\mydiamond	
\end{example}

\begin{figure}[!t]
\hfill
\begin{subfigure}[b]{0.45\textwidth}		
	\centering
	\begin{tikzpicture}[
		roundnode/.style={},
		squarednode/.style={rectangle, draw=black, minimum size=5mm},
		> = stealth, 
        shorten > = 1pt, 
        shorten < = 1pt, 
        auto,
   	    node distance = 1.5cm, 
       	semithick 
		]
		\node[squarednode]		(R1)								{\scriptsize $R_1^*$};
		\node[squarednode]		(R2)			[right=of R1]		{\scriptsize $R_2^*$};
		\node[squarednode]		(R3)			[below=of R2]		{\scriptsize $R_3^*$};
		\node[squarednode]		(R4)			[left=of R3]			{\scriptsize $R_4^*$};
 
		\draw[-]	 	(R1) edge node {\scriptsize $+$} (R2);
		\draw[-]	 	(R2) edge node {\scriptsize $+$} (R3);
		\draw[-]	 	(R2) edge node {\scriptsize $+$} (R4);
		\draw[-]	 	(R4) edge node {\scriptsize $+$} (R1);			
	\end{tikzpicture}

	\caption{R-graph of the reduced RKIP network}\label{fig:r_of_reduced}
\end{subfigure}
\hfill
\begin{subfigure}[b]{0.4\textwidth}		
	\centering
	\begin{tikzpicture}[
		roundnode/.style={},
		squarednode/.style={rectangle, draw=black, minimum size=5mm},
		> = stealth, 
        shorten > = 1pt, 
        shorten < = 1pt, 
        auto,
   	    node distance = 1.5cm, 
       	semithick 
		]
		\node[squarednode]		(R1)		at		(0, 0)		{\scriptsize $R^*_1$};
		\node[squarednode]		(R2)		at		(2, 0)		{\scriptsize $R^*_2$};
		 
		\draw[-]				(R1) edge node {\scriptsize $+$} (R2);			
	\end{tikzpicture}
	
		\vspace{0.5cm}
	\caption{R-graph of the reduced $n$-site phosphorylation network}\label{fig:r_of_phospho}
\end{subfigure}
\hfill

\caption{The  R-graphs of the reduced RKIP network from Example \ref{ex:RKIP} and of the reduced $n$-site phosphorylation network from Example \ref{ex:multi_site_sequential}. 
}\label{fig:of_reduced}
\end{figure}
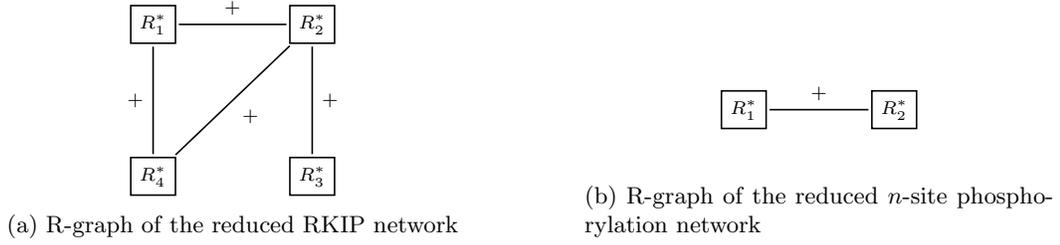

\begin{example}[Processive $n$-Site Phosphorylation Mechanism]\label{ex:multi_site_sequential}
	Consider the sequential and processive $n$-site phosphorylation mechanism described by the reaction network
	\[
		\begin{array}{c}
			S_0 + E \longleftrightarrow S_0E \longleftrightarrow S_1E \longleftrightarrow \cdots \longleftrightarrow S_{n-1}E \longrightarrow  { S_n + E} \\[1ex]
			S_n + F \longleftrightarrow S_nF \longleftrightarrow	 \cdots \longleftrightarrow S_2F \longleftrightarrow S_1F \longrightarrow S_0 + F\,.
		\end{array}
	\]
	(See \cite{conradi--shiu-2015} and references therein.) Note that the one-site mechanism from Example \ref{ex:one_site} is the special case when $n = 1$ of this mechanism. The reaction network obtained by the successive removal of the intermediates $S_0E, \ldots, S_{n-1}E, S_nF,$ $\ldots, S_1F$ is given by
\[
	R_1^*\colon\quad S_0 \longrightarrow S_1
	\quad\quad\quad
	R_2^*\colon\quad S_1 \longrightarrow S_0\,.
\]
The R-graph of the reduced network  has no loops, so it vacuously has the positive loop property   {(Figure \ref{fig:r_of_phospho})}.   {Furthermore, it is connected.
It follows from Theorem \ref{thm:main1} that the R-graph of the original $n$-site phosphorylation network has the same properties.}

The R-graph of the reduced network yields, via Remark \ref{rem:orthant_cone}, the  {orthant} $K^* := \rplus^2$. One can argue as in Example \ref{ex:RKIP_revisited} that $(1, 1) \in \kernel N^*$, showing that $\kernel N^* \cap \interior K^* \neq \varnothing$, and so $\kernel N \cap \interior K \neq \varnothing$ via Theorem \ref{thm:main2}. Finally, one can once again show that the flow of $G$ is bounded-persistent via \cite[Theorems 1 and 2]{mmf--feliu--wiuf-2015} and Remark \ref{rem:collapse}.
	
	It follows from  {Corollary \ref{cor:lets_refer_to_it}} that, under kinetic assumptions (\rone)--(\rthree), each stoichiometric compatibility class has a unique equilibrium to which all trajectories starting with strictly positive concentrations converge.
	
	 { This result is also proven in \cite{conradi--shiu-2015}. See also \cite{eithun--shiu-2017} for a class of reaction networks generalising this example  where the reduction approach presented here can be applied.   }
	
	 {In passing we make the observation that if $R^*_1$ and $R^*_2$ were treated as a single reversible reaction $S_0 \longleftrightarrow S_1$, then the corresponding orthant, say $\widetilde K$, and stoichiometric matrix, say $\widetilde N$, fulfils $\ker \widetilde N\cap \widetilde K=\{0\}\cap \rplus=\{0\}$. Hence, the kernel property of Theorem \ref{thm:main2} would not be invariant under reduction. Also note that in this case, we would only be able to infer the weaker convergence property implied by (P1) for the reduced network and not the stronger implied by (P2) as above. }
\mydiamond	
\end{example}

\begin{example}[A Phosphorelay]
Consider the general phosphorelay system studied in \cite{knudsen--feliu--wiuf-2012}. The underlying reaction network consists of the reactions
\begin{align*}
	S_1^m \longleftrightarrow S_2^m \longleftrightarrow \cdots \longleftrightarrow S_{N_m}^m\,,
	&\quad
	m = 1, \ldots, M\,,\\[8pt]
	S_{N_m}^m + S_0^{m+1} \longleftrightarrow X^m \longrightarrow S_0^m + S_1^{m+1}\,,
	& \quad
	m = 1, \ldots, M - 1\,, \\[8pt]
	S_0^1 \longrightarrow S_1^1
	\qquad
	\quad
	S_{N_M}^M \longrightarrow S_0^M\,. &
\end{align*}
For each $m \in [M]$ and each $n \in [N_m]$, $S_n^m$ represents the $m^{th}$ substrate (out of $M \geqslant 1$), phosphorylated at its $n^{th}$ site (out of $N_m \geqslant 1$), and $S_0^m$ corresponds to the unphosphorylated state of the $m^{th}$ substrate. The phosphate group can be transferred sequentially from site to site within the same substrate, or via the formation of an intermediate complex $X^m$ from $S_{N_m}^m$ to $S_1^{m+1}$, $m = 1, \ldots, M - 1$. The methods in \cite{angeli--deleenheer--sontag-2010} were employed in \cite{knudsen--feliu--wiuf-2012} to show that, under mass-action kinetics, the phosphorelay has a unique nonnegative equilibrium to which all solutions starting with positive concentrations converge.

First note that each species in the phosphorelay takes part in exactly two reactions. Thus, (\Hzero) is satisfied. Now
	\[
		\{S_1^1, S_2^1, \ldots, S_{N_1-1}^1, S_2^2, \ldots, S_{N_2-1}^2, \ldots, S_2^{M}, \ldots, S_{N_{M}-1}^{M}, S_{N_M}^M, X^1, \ldots, X^{M-1}\}
	\]
	is a set of intermediates. The network obtained after their removal is given by
	
	\hfill
	\begin{minipage}[c]{0.35\textwidth}
		\[
			\begin{array}{rc}
				R_1\colon & S_0^1 \longrightarrow S_{N_1}^1 \\[1ex]
				R_2\colon & S_1^2 \longleftrightarrow S_{N_2}^2 \\[1ex]
				 { \vdots \phantom{x}} & \vdots \\[1ex]
				R_{M-1}\colon & S_1^{M-1} \longleftrightarrow S_{N_{M-1}}^{M-1} \\[1ex]
				R_M\colon & S_1^M \longrightarrow S_0^M
			\end{array}
		\]
	\end{minipage}%
	\hfill
	\begin{minipage}[c]{0.6\textwidth}
		\[
			\begin{array}{rc}
				R_1^t\colon & S_{N_1}^1 + S_0^2 \longrightarrow S_0^1 + S_1^2 \\[1ex]
				 { \vdots \phantom{x}} & \vdots \\[1ex]
				R_{M-1}^t\colon & S_{N_{M-1}}^{M-1} + S_0^M \longrightarrow S_0^{M-1} + S_1^M\,.
			\end{array}
		\]
	\end{minipage}%
	\hfill
	
	\vskip 1ex
	
	 {The  R-graph of the reduction is sketched in Figure  \ref{fig:phosphorelay_R}, where it  can be readily seen that it is connected and has the positive loop property}. It follows from Theorem \ref{thm:main1} that the R-graph of the original network  {also fulfils these properties}. 
	
	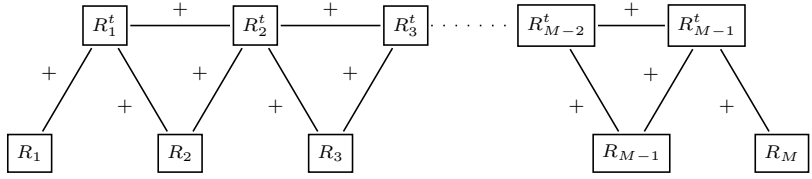
\begin{figure}[!t]
	\centering
	
	\centering
	\begin{tikzpicture}[
		roundnode/.style={circle, draw=green!60, fill=green!5, very thick, minimum size=5mm},
		squarednode/.style={rectangle, draw=black, minimum size=5mm},
		> = stealth, 
        shorten > = 1pt, 
        shorten < = 1pt, 
        auto,
       	semithick 
		]
		\node[squarednode]		(R1)		at		(0, 0)		{\scriptsize $R_1$};
		\node[squarednode]		(Rt1)		at		(1, 1.7)		{\scriptsize $R_1^t$};
		\node[squarednode]		(R2)		at		(2, 0)		{\scriptsize $R_2$};
		\node[squarednode]		(Rt2)		at		(3, 1.7)		{\scriptsize $R_2^t$};
		\node[squarednode]		(R3)		at		(4, 0)		{\scriptsize $R_3$};
		\node[squarednode]		(Rt3)		at		(5, 1.7)		{\scriptsize $R_3^t$};
		\node[squarednode]		(RMm1)		at		(8, 0)		{\scriptsize $R_{M-1}$};
		\node[squarednode]		(RtMm2)		at		(7, 1.7)		{\scriptsize $R_{M-2}^t$};
		\node[squarednode]		(RtMm1)		at		(9, 1.7)		{\scriptsize $R_{M-1}^t$};
		\node[squarednode]		(RM)		at		(10, 0)		{\scriptsize $R_{M}$};
 
		\draw[-]					(R1) edge node {\scriptsize $+$} (Rt1);
		\draw[-]					(R2) edge node {\scriptsize $+$} (Rt1);
		\draw[-]					(Rt1) edge node {\scriptsize $+$} (Rt2);
		\draw[-]					(R2) edge node {\scriptsize $+$} (Rt2);
		\draw[-]					(R3) edge node {\scriptsize $+$} (Rt2);
		\draw[-]					(Rt2) edge node {\scriptsize $+$} (Rt3);
		\draw[-]					(R3) edge node {\scriptsize $+$} (Rt3);
		\draw[loosely dotted]		(Rt3) -- (RtMm2);
		\draw[-]					(RMm1) edge node {\scriptsize $+$} (RtMm2);
		\draw[-]					(RMm1) edge node {\scriptsize $+$} (RtMm1);
		\draw[-]					(RM) edge node {\scriptsize $+$} (RtMm1);
		\draw[-]					(RtMm2) edge node {\scriptsize $+$} (RtMm1);
	\end{tikzpicture}
	
	\caption{The  R-graph of the reduction of the phosphorelay system.}\label{fig:phosphorelay_R}
	\end{figure}
	
	The R-graph of the reduced network (Figure \ref{fig:phosphorelay_R}) yields, via Remark \ref{rem:orthant_cone}, the  {orthant} $K^* := \rplus^{2M-1}$. The same argument as in the previous two examples shows that $(1, \ldots, 1) \in \r^{2M-1}$ belongs to the kernel of the stoichiometric matrix $N^*$ of the reduced network. Thus, $\ker N^* \cap \interior K^* \neq \varnothing$, and so $\ker N \cap \interior K \neq \varnothing$ by Theorem \ref{thm:main2}. As before, bounded-persistent follows via \cite[Theorems 1 and 2]{mmf--feliu--wiuf-2015} and Remark \ref{rem:collapse}.
		
	We conclude via  {Corollary \ref{cor:lets_refer_to_it}} that, within each stoichiometric compatibility class, there exists a unique nonnegative equilibrium to which all solutions starting with strictly positive concentrations converge.
\mydiamond	
\end{example}

\subsection{Further Comments on  { the Definition of Intermediates}}\label{subsec:exceptions}

Our working definition of intermediates in this paper is  {not quite as general as } 
in \cite{mmf--feliu--wiuf-2015}, where a related study of invariance of qualitative properties of reaction networks under the removal of sets of intermediates and catalysts was carried out. We conclude this section with a discussion of the differences, 
 {and give an example of what goes wrong with the kinds of intermediates precluded in our working definition.}

In \cite{mmf--feliu--wiuf-2015}, a species $Y$ would still be considered an intermediate if $y = y'$ in (I2).  In this case, $G^*$ is defined by simply removing the reaction $y \longleftrightarrow Y$ from $G$ (or removing the reactions $y \longrightarrow Y$ and $Y \longrightarrow y$, if that is the case). 
 { The positive loop property of the R-graph is not invariant under the removal of intermediates of this type. To see this, consider the reaction network $G$
\[
  R_1\colon  \quad A + B \longleftrightarrow Y \quad\quad\quad R_2\colon \quad A \longleftrightarrow B\,.
\]
The reaction network obtained by removing $Y$ as described above is
\[
	G^*\colon \quad\quad A \longleftrightarrow B\,.
\]
The R-graph of $G^*$  has the positive loop property, since it has only one vertex, while that of $G$ has not. Indeed, the R-graph of $G$ is 
\begin{center}
\begin{tikzpicture}[
		roundnode/.style={},
		squarednode/.style={rectangle, draw=black, minimum size=5mm},
		> = stealth, 
        shorten > = 1pt, 
        shorten < = 1pt, 
        auto,
   	    node distance = 1.5cm, 
       	semithick 
		]
		\node[squarednode]		(R1)		at		(0, 0)		{\scriptsize $R^*_1$};
		\node[squarednode]		(R2)		at		(2, 0)		{\scriptsize $R^*_2$};
		 
		\draw[-]				(R1) edge node {\scriptsize $+,-$} (R2);			
	\end{tikzpicture}
	\end{center}
which clearly has a loop with an odd number of negative edges.}
 
 {The same happens if we consider the case of irreversible reactions $A+B\longrightarrow Y$ and $Y \longrightarrow A+B$, rather than the single reversible reaction $A + B \longleftrightarrow Y$.}

 {Also it might be that the R-graph of $G^*$ is connected, while that of $G$ is not. For example, consider the reaction network $G$
\[
  R_1\colon  \quad A + B \longleftrightarrow Y \quad\quad\quad R_2\colon \quad A \longleftrightarrow C\quad\quad\quad R_3\colon \quad B \longleftrightarrow D\,.
\]
The reaction network obtained by removing $Y$  is
\[
	G^*\colon \quad\quad  A \longleftrightarrow C\quad\quad\quad R_3\colon \quad B \longleftrightarrow D.
\]
The R-graph for $G^*$ has two connected components, while that of $G$ is connected.
}

\section{Proofs of Theorems \ref{thm:dsrgraph}, \ref{thm:main1} and \ref{thm:main2}}\label{sec:proofs}

\subsection{Proof of Theorem \ref{thm:dsrgraph}}
 {In order to prove Theorem~\ref{thm:dsrgraph}, we start with an auxiliary lemma. }
 {
Given a vector subspace $V\subseteq \r^n$, we say that a nonnegative vector $\omega \in V\cap \r^n_{\geqslant 0}$ has \emph{minimal support} if there does not exist a  nonzero vector $\omega'\in V\cap \r^n_{\geqslant 0}$ with support strictly included in $\supp \omega$.

\begin{lemma}\label{lem:great}
Assume $G$ fulfils {\em (G1), (G2), (G4)}, the set $\calR_{\rightarrow}$ is nonempty,  and the SR-graph of $G$ is connected. Then,
\begin{itemize}
\item[(i)] If   $v \in \ker N$ fulfils $v_j>0$ for all $j\in [m]$ for which $R_j$ is irreversible, then $v_j\neq 0$ for all $j \in [m]$ for which $R_j$ is reversible.
\item[(ii)] Every species  takes part in exactly two reactions in $\mathcal{R}$.
\item[(iii)] Every simple loop in $G_{SR}$  defines a vector $\omega\in \Gamma^{\perp}$ with support the set of species that are vertices in the loop.
\item[(iv)] For every vector $\omega\in \Gamma^{\perp}\cap \r^n_{\geqslant 0}$ with minimal support, there is a  directed simple  loop in $G_{SR}^\rightarrow$ with species vertex all the species in $\supp(\omega)$.
\end{itemize}
\end{lemma}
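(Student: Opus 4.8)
The plan is to prove the four parts in order, exploiting the two dual families of linear relations visible on the bipartite SR-graph: the relation $\sum_k N_{ik}v_k=0$ attached to a species vertex $S_i$ whenever $v\in\ker N$, and the relation $\sum_i\omega_i N_{ij}=0$ attached to a reaction vertex $R_j$ whenever $\omega\in\Gamma^\perp$. For \emph{(i)} I would propagate the support of $v$ through the connected SR-graph. Set $P:=\{j\mid v_j\neq 0\}$ and $Z:=\{j\mid v_j=0\}$; since $\calR_\rightarrow\neq\varnothing$ and $v$ is positive there, $P\neq\varnothing$. If $Z\neq\varnothing$, connectedness yields a path between a reaction of $Z$ and a reaction of $P$, hence a species vertex $S_\ell$ adjacent to some $R_a$ with $v_a=0$ and some $R_b$ with $v_b\neq 0$. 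By (G2), $S_\ell$ meets no reaction besides $R_a,R_b$, so its relation collapses to $N_{\ell a}v_a+N_{\ell b}v_b=0$ and forces $v_b=0$, a contradiction. Thus $Z=\varnothing$, which is (i); only (G1)--(G2), connectedness and $\calR_\rightarrow\neq\varnothing$ are used.

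Part \emph{(ii)} then follows at once: each species lies in at least one reaction (by the minimality built into the definition of a network) and in at most two (by (G2)); if some $S_i$ lay in a single reaction $R_j$, the relation of the consistency vector $v$ from (G4) would read $N_{ij}v_j=0$, forcing $v_j=0$ and contradicting (i). For \emph{(iii)}, take a simple loop $S_{i_1}\,\text{---}\,R_{j_1}\,\text{---}\,\cdots\,\text{---}\,S_{i_\lambda}\,\text{---}\,R_{j_\lambda}\,\text{---}\,S_{i_1}$ and set $\omega_i:=0$ off the loop. By (ii) each loop-species meets exactly its two loop-neighbours, so (the loop being simple) no loop-species meets a reaction outside the loop; consequently every off-loop reaction satisfies its $\Gamma^\perp$-relation trivially, and each loop-reaction $R_{j_k}$ imposes only $\omega_{i_k}N_{i_kj_k}+\omega_{i_{k+1}}N_{i_{k+1}j_k}=0$. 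This recurrence, started at $\omega_{i_1}=1$, produces nonzero entries; I would check that it closes up by evaluating the product $\prod_k(-N_{i_kj_k}/N_{i_{k+1}j_k})$, whose magnitude and sign both telescope to the required value $1$ once the species relations $N_{i_kj_k}v_{j_k}=-N_{i_kj_{k-1}}v_{j_{k-1}}$ (from $v\in\ker N$) are substituted, using that $v$ is nowhere zero by (i)--(ii). This gives $\omega\in\Gamma^\perp$ supported exactly on the loop-species.

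For \emph{(iv)}, write $T:=\supp\omega$ and let $Q$ be the reactions meeting $T$. Since $\omega>0$ on $T$, the $\Gamma^\perp$-relation of any $R_j\in Q$ shows $R_j$ meets at least two species of $T$ (a single one would force a coordinate of $\omega$ to vanish); with (ii) this makes the bipartite subgraph $H$ of $G_{SR}$ on $T\cup Q$ of minimum degree at least two, so $H$ contains a simple cycle $\Lambda$. By (iii), $\Lambda$ defines $\omega'\in\Gamma^\perp$ supported exactly on its species set $T'\subseteq T$; if $T'\subsetneq T$, then, as $\omega'$ has only nonzero entries on $T'$, varying $t$ away from $0$ in $\omega-t\omega'$ until some coordinate of $T'$ first vanishes produces a nonzero vector of $\Gamma^\perp\cap\rplus^n$ with support strictly inside $T$, contradicting minimality. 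Hence $T'=T$ and $\Lambda$ covers all of $T$. To realise $\Lambda$ as a directed loop in $G^\rightarrow_{SR}$, I orient each incident edge by the sign of $N_{ij}v_j$: put $R_j\to S_i$ when $N_{ij}v_j>0$ and $S_i\to R_j$ when $N_{ij}v_j<0$. Every such edge is legal (reaction-to-species edges always exist; at an irreversible $R_j$ one has $v_j>0$, so $N_{ij}v_j<0$ forces $N_{ij}<0$, i.e.\ $S_i$ a reactant, whence $(S_i,R_j)\in E^\rightarrow_{SR}$). At each loop-species the two incident signs are opposite because $v\in\ker N$, and at each loop-reaction they are opposite because $N_{i_kj_k},N_{i_{k+1}j_k}$ have opposite signs (as $\omega>0$) and share the nonzero factor $v_{j_k}$. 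So every vertex receives in-degree and out-degree one, making $\Lambda$ a directed simple loop with species vertices exactly $T$.

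I expect the orientation step in (iv) to be the crux. A configuration such as two irreversible reactions sharing both their species with the same orientation shows that an undirected loop covering $T$ need not be orientable in $G^\rightarrow_{SR}$ at all; what rescues the argument is precisely the nowhere-zero consistency flux $v$ supplied by (G4) and (i), which encodes a consistent direction through the signs of $N_{ij}v_j$ (and such bad configurations are themselves excluded by (G4)). The closing-up computation in (iii) is the other delicate point, and it too depends on $v$ vanishing nowhere.
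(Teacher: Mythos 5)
Your proof is correct and follows essentially the same route as the paper's: both parts hinge on propagating the nowhere-zero consistency flux $v$ supplied by (G4) through the degree-two species rows forced by (G1)--(G2), and on the same loop/conservation-law correspondence in (iii)--(iv). The only differences are local and cosmetic --- in (iii) you verify the closing-up product directly (it does telescope to $(-1)^k\cdot(-1)^k=1$ via the relations $N_{i_\ell j_{\ell-1}}v_{j_{\ell-1}}+N_{i_\ell j_\ell}v_{j_\ell}=0$) where the paper instead runs a rank argument on the $k\times k$ submatrix, and in (iv) your minimum-degree cycle extraction, first-vanishing-coordinate perturbation, and local orientation rule $\sign(N_{ij}v_j)$ replace the paper's explicit path-walking, its observation that $\omega'$ can be chosen nonnegative, and its propagation of the signs $N_{i_\ell j_\ell}v_{j_\ell}$ around the loop.
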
 

\begin{proof} 
Recall that the entries of the stoichiometric matrix $N$ are denoted by $N_{ij}$. In this proof we use repeatedly that, in view of (G1), $N_{ij}\neq 0$ whenever $S_i$ is part of   reaction $R_j$. 

(i) Let $R_j\in \mathcal{R}$ and choose any $R_{j_0}\in \mathcal{R}_{\rightarrow}\neq \varnothing$. Then $v_{j_0}> 0$ by assumption.
 By hypothesis, there exists 
 a simple (undirected) path from $R_{j_0}$ to $R_j$:
$$R_{j_0}  \   \text{---} \   S_{i_1}   \   \text{---} \   R_{j_1}  \   \text{---} \    \dots  \   \text{---} \   R_{j_{k-1}}   \   \text{---} \    S_{i_k}  \   \text{---} \  R_{j_k}=R_j.   $$
By (G2), this path tells us that the $i_\ell$-th row of $N$ has exactly two entries different from zero,   $N_{i_\ell j_{\ell-1}}$ and $N_{i_\ell j_{\ell}}$, for all $\ell \in [k]$.
Since the scalar product of each of these rows  with $v$ is zero, we have
\begin{equation}\label{eq:n} N_{i_\ell j_{\ell-1}}v_{j_{\ell-1}} +  N_{i_\ell j_{\ell}} v_{j_\ell}  =0,\qquad \ell \in [k].
\end{equation}
Thus
\begin{equation}\label{eq:n2} 
v_{j_\ell} = \frac{-N_{i_\ell j_{\ell-1}}v_{j_{\ell-1}}}{N_{i_\ell j_\ell}},\ \ell \in [k], \quad \Rightarrow \quad   v_j=v_{j_k} = (-1)^k \left(\prod_{\ell=1}^{k} \frac{N_{i_\ell j_{\ell-1}}}{N_{i_\ell j_\ell}} \right) v_0 \neq 0.
\end{equation}

\medskip
In the rest of the proof we choose a vector   $v\in \ker N$  as in (i), which exists by assumption (G4).

\medskip
(ii) If $S_i$ is part of only one reaction $R_j$, then the $i$-th row of $N$ has only one nonzero entry, $N_{ij}$. The equality $Nv=0$ implies $v_j=0$, contradicting (i).

\medskip
(iii) Write the loop as
\begin{equation}\label{eq:loop}
S_{i_1} \   \text{---} \   R_{j_1}   \ \text{---} \  S_{i_2} \  \text{---} \  \dots  \  \text{---}      \  S_{i_k}\  \text{---}   \  R_{j_k}  \ \text{---} \  S_{i_1}.   
\end{equation}
Consider the submatrix $\widetilde{N}$ of $N$ given by the rows $i_1,\dots,i_k$. 
By (G2), only the columns $j_1,\dots,j_k$ of $\widetilde{N}$ are nonzero. Since $\widetilde{N} v =0$ and all entries of $v$ are different from zero, it follows that   the rank of the matrix is (at most) $k-1$.
Thus there exists a nonzero vector $\widetilde{\omega}\in \r^{k}$ such that $\widetilde{\omega}^t\widetilde{N} =0$.
Since each column and row of $\widetilde{N}$ has exactly two nonzero entries, all components of $\widetilde{\omega}$ are nonzero.
 The vector $\omega\in \r^n$ defined by $\omega_\ell=\widetilde{\omega}_\ell$ for $\ell\in \{i_1,\dots,i_k\}$, and zero otherwise, satisfies $\omega\in \Gamma^\perp$ and   it has support  $\{S_{i_1},\dots,S_{i_k}\}$.
 
\medskip
(iv)
Let $S_{i_1}\in \supp \omega$ and $R_{j_1}$ be one of the two reactions $S_{i_1}$ is involved in. 
Since $\omega^t N=0$ and the components of $\omega$ are nonnegative, $R_{j_1}$ involves at least one other species  $S_{i_2}\in \supp \omega$  for which $N_{i_2j_1}$ and $N_{i_1j_1}$ have opposite nonzero sign.  
Let $R_{j_2}$ be the other reaction involving $S_{i_2}$ by (ii). 
 Again $R_{j_2}$ must involve another species $S_{i_3}\in \supp \omega$, such that $N_{i_2j_2}$ and $N_{i_3j_2}$ have opposite nonzero sign. Proceeding in this way we create a  path   in  $G_{SR}$:
$$  S_{i_1} \   \text{---} \   R_{j_1}   \ \text{---} \  S_{i_2} \  \text{---} \  R_{j_2}  \ \text{---} \  S_{i_3} \  \text{---} \   \dots  $$
where all species vertices are in the support of $\omega$.
At some point the path meets a species or a reaction   already considered, creating a simple loop. By (iii), this loop generates a vector $\omega'\in \Gamma^{\perp}$ with support the set of species in the loop. We will show that the loop involves all the species in  $\supp \omega$. 
Write the loop as in \eqref{eq:loop} and consider the matrix $\widetilde{N}$ as  in the proof of (iii). The two nonzero entries of the columns $j_1,\dots,j_k$ of $\widetilde{N}$ have opposite sign by construction of the loop. It follows that $\omega'_{j_1}, \dots,\omega_{j_k}'$ have the same sign.  Thus $\omega'$ can be chosen with nonnegative entries. 
Since $\omega$ has minimal support, $\supp \omega=\supp \omega'=\{S_{i_1},\dots,S_{i_k}\}$.

\medskip
It remains  to show that  this loop is directed. 
Equation \eqref{eq:n} holds for $\ell \in [k]$ by defining $j_0=j_k$. 
 {It implies that $N_{i_\ell j_{\ell-1}}v_{j_{\ell-1}}$ and $N_{i_\ell j_{\ell}} v_{j_\ell} $ have opposite signs and are nonzero for all $\ell\in [k]$. 
Assume $N_{i_1j_{k}} v_{j_{k}}>0$. Then $N_{i_1j_{1}} v_{j_{1}}<0$ and thus $N_{i_2j_{1}} v_{j_{1}}>0$. We iterate this argument  along the loop for $\ell \in [k]$ to conclude that $N_{i_\ell j_{\ell-1}} v_{j_{\ell-1}}  >0$
 and $N_{i_\ell j_\ell} v_{j_\ell} <0$ for all $\ell\in [k]$.}
 
 { Recall also that $N_{i_\ell j_{\ell}}$ and $N_{i_{\ell +1}j_{\ell}}$ have opposite nonzero signs (with $i_{k+1}=i_1$).}
By Remark~\ref{rem:SR_labeling}, it is enough to show that for every irreversible reaction $R_{j_\ell}$ that is a vertex in the loop, then $S_{i_{\ell}}$ is not part of its product. If this were the case, then we would have $N_{i_{\ell} j_\ell}>0$, and since the reaction is irreversible, $v_{j_\ell}>0$, contradicting that $N_{i_\ell j_\ell} v_{j_\ell} <0$. Therefore  the loop is directed.

 If $N_{i_1j_{k}} v_{j_{k}}<0$, then we conclude that the loop is directed in the reverse direction.
 \mybox
 \end{proof}

We are now ready to prove Theorem~\ref{thm:dsrgraph}. 
 {First observe that since every species is part of at least one reaction, the SR-graph is connected if and only if the R-graph is connected by definition. Further, if the directed SR-graph is R-strongly connected, then the R-graph is connected by construction. It remains  to show that if the SR-graph is connected, then the directed SR-graph is R-strongly connected.} 

 {For this, assume that the SR-graph of G is connected. }
 If all reactions of $G$ are reversible, then edges in both direction exist between any pair of vertices of $G_{SR}^{\rightarrow}$. Thus $G_{SR}^{\rightarrow}$ is R-strongly connected.
	
Assume now that there exists at least one irreversible reaction. 
Since by hypothesis the SR-graph of $G$ is connected, it is enough to show that every edge $(S_{i_\ell},R_{j_\ell})$ in $G_{SR}^{\rightarrow}$ is part of a directed simple loop in $G_{SR}^{\rightarrow}$. Indeed, given a pair of reactions $R$ and $R'$,  there exists an undirected path in the 
SR-graph of $G$:
$$ R= R_{j_1}   \   \text{---} \  S_{i_2} \   \text{---} \   R_{j_2}    \  \text{---} \  \dots  \  \text{---}      \  S_{i_{k}}\  \text{---}   \  R_{j_k}  =R' $$
An edge from a reaction vertex to a species vertex always exists provided the two vertices are connected in $G_{SR}$. If only the edge $(R_{j_\ell},S_{i_\ell})$ exists in $G_{SR}^{\rightarrow}$ for one of the edges in the path in $G_{SR}$, then we can use the directed simple loop to replace it with a directed path from $S_{i_\ell}$ to $R_{j_\ell}$. In this way we construct a directed path from $R$ to $R'$.

We use the following result. There exist 
\begin{equation}\label{eq:extremerays}
\omega^1,\dots,\omega^d \in \Gamma^{\perp} \cap \r^n_{\geqslant 0}
\end{equation}
 such that each 
 $\omega^i$, $i\in [d]$, has minimal support (in the sense that there is not another $\omega' \in \Gamma^{\perp} \cap \r^n_{\geqslant 0}$ with support contained in $\omega^i$) and any other $\omega \in \Gamma^{\perp} \cap \r^n_{\geqslant 0}$ can be written as a linear combination of $\omega^1,\dots,\omega^d$ with nonnegative coefficients. (These are known as the \emph{extreme rays} of the polyhedral cone $\Gamma^{\perp} \cap \r^n_{\geqslant 0}$ \cite{rockafellar}.)

Consider an edge $(R_{j_\ell},S_{i_\ell})$ in the directed SR-graph of $G$ for $R_j$ irreversible. Then $S_{i_\ell}$ is part of the product of $R_{j_\ell}$ and the edge $(S_{i_\ell},R_{j_\ell})$ is not in $G_{SR}^{\rightarrow}$. 
Let $R_{j_{\ell-1}}$ be the other reaction involving $S_{i_\ell}$, Lemma~\ref{lem:great}(ii). 
Since the network is conservative, at least one of the vectors in \eqref{eq:extremerays}, say $\omega=\omega^i$ satisfies $\omega_{i_\ell}>0$.
By Lemma~\ref{lem:great}(iv), there is a directed simple loop involving $S_{i_\ell}$. 
Such a loop involves the two reactions $R_{j_{\ell-1}}$ and $R_{j_{\ell}}$ and since $R_{j_{\ell}}$ is irreversible, it is necessarily of the form
$$  R_{j_{\ell}}  \longrightarrow    S_{i_\ell}    \longrightarrow R_{j_{\ell-1}}    \longrightarrow \dots \longrightarrow  R_{j_{\ell}}.$$
This gives a directed simple loop having $ R_{j_{\ell}} $ as a vertex, finishing the proof.

\mybox
	
	}

\subsection{Proof of Theorem \ref{thm:main1}({\em i\,})}

Note that it suffices to prove Theorems \ref{thm:main1} and \ref{thm:main2} for the removal of a single intermediate $Y$. The general result then follows by induction on the number of intermediates successively removed. We have four cases to consider, depending on how $Y$ appears in $G$, all of which are captured by
\begin{equation}\label{eq:cases_i_and_ii}
	 { y= } \widehat y + \sum_{i = 1}^p \gamma_iE_i \ \text{---} \ Y \ \text{---} \ \widehat y' + \sum_{i = 1}^p \gamma_iE_i = 	 { y'}\,,
\end{equation}
where each `---' may mean either `$\longrightarrow$' or `$\longleftrightarrow$,' 
\[
	e := \sum_{i = 1}^p \gamma_iE_i
\]
may be an empty sum, $\supp \widehat y \cap \supp \widehat y' = \varnothing$,  { and $\supp \widehat y\neq \varnothing$ and $\supp \widehat y'\neq \varnothing$.}

	Denote 
	\[
		R_Y\colon y \ \text{---} \ Y\,,
		\quad\quad\quad
		R_Y'\colon Y \ \text{---} \ y'\,.
	\]

%
%
 {In view of Theorem~\ref{thm:dsrgraph}, we show that the SR-graph of $G^*$ is connected if and only  the SR-graph of $G$ is connected. }
By reordering the species in $\calS = \{S_1, \ldots, S_n\}$, if necessary, we may write $\widehat y = \alpha_1S_1 + \cdots + \alpha_kS_k$ and $\widehat y' = \alpha'_1S'_1 + \cdots + \alpha'_{k'}S'_{k'}$ for some $\alpha_1, \ldots, \alpha_k>0$, $\alpha'_1, \ldots, \alpha'_{k'} > 0$ and some $S_1, \ldots, S_k, S'_1, \ldots, S'_{k'} \in \calS$. Figure \ref{fig:directed_sr_proof_b} illustrates the  SR-graph of $G$. Note that there is always a  path $R_Y\   \text{---} \   Y\   \text{---} \  R_Y'$ in the graph. This path is replaced by the reaction vertex $R^* = \widehat{y} \ \text{---} \  \widehat{y'}$ in the  SR-graph of $G^*$ (see Figure \ref{fig:directed_sr_proof_c}).

\medskip
 ($\Rightarrow$) Suppose the SR-graph of $G^*$ is  connected. 
For any $R \in \calR  \backslash \{R_Y, R'_Y\}$, there exists a  simple path
\[
	R \   \text{---}^* \    \cdots \   \text{---}^* \  S \   \text{---}^* \   R^* 
\]
in the  SR-graph of $G^*$ connecting $R$ and $R^*$ with $S\in \mathcal{S}$. Here, we annotate the edges with a $*$ to emphasize that the path is in the SR-graph of $G^*$ rather than $G_{SR}$. None of the reaction vertices of the path are $R_Y$ or $R_Y'$ and the species vertices are different from $Y$ and the species in $\supp e$.
 
If $S$ is part of the reactant of $R^*$, then it is part of the reactant of $R_Y$ and we have a path from $R$ to $R_Y$. Similarly, if $S$ is part of the product of $R^*$, then  we have a path from $R$ to $R_Y'$. Combining these paths with the path $R_Y  \   \text{---} \ Y  \   \text{---} \ R_Y'$ if necessary, we obtain a path connecting any reaction $R$ to $R_Y'$, showing that $G_{SR}$ is also connected.

\medskip
($\Leftarrow$) Now suppose $G_{SR}$ is  connected. 
We   show that the SR-graph of $G^*$ is also connected by showing that  for any $R \in \calR^* \backslash \{R^*\}$, there is a path  connecting $R$ to $R^*$. 	
Since $R \neq R^*$, we have $R \in \calR$. By  connectedness, there exists a simple path
\[
	R'_Y   \   \text{---} \ S_{i_1}   \   \text{---} \  R_{j_1}   \   \text{---} \ S_{i_2}  \   \text{---} \ \cdots   \   \text{---} \ S_{i_k}  \   \text{---} \ R
\]
connecting $R'_Y$ and $R$ in  $G_{SR}$. 
Note that only one of the species $Y$ and the species in $\supp e$ can be part of the path, as $R'_Y$ would otherwise be part of this path twice. If this is the case, then the species must be $S_{i_1}$  and $R_{j_1}=R_Y$.

If the path does not go through $R_Y$, 
then all reactions and species vertices belong to $G^*$ and thus the path gives a path connecting $R^*$ and $R$ by merely replacing $R'_Y$ by $R^*$. 

If the path does go through $R_Y$, in other words, if $R_{j_1}=R_Y$, then the path 
\[
	R^*   \   \text{---}^* \ S_{i_2}  \   \text{---}^* \ \cdots   \   \text{---}^* \  S_{i_k}   \   \text{---}^* \ R
\]
is a path connecting $R^*$ and $R$. This finishes the proof. \mybox

\begin{figure}[!tb]
\hfill
\begin{subfigure}{0.45\textwidth}
	\centering
	\begin{tikzpicture}[
		roundnode/.style={},
		squarednode/.style={rectangle, draw=black, thick, minimum size=5mm},
		> = stealth, 
        shorten > = 2pt, 
        shorten < = 2pt, 
        auto,
       	semithick 
		]
		\node[squarednode]		(RY)		at		(1, 0)			{\scriptsize $R_Y$};
		\node[roundnode]			(Y)			at		(2.5, 1)			{\scriptsize $Y$};
		\node[squarednode]		(RYprime)	at		(4, 0)			{\scriptsize $R'_Y$};
		\node[roundnode]			(Sk)		at		(0, -1)			{\scriptsize $S_k$};
		\node					(luvdots)	at		(-0.2, 0)			{$\vdots$};
		\node[roundnode]			(S1)		at		(0, 1)			{\scriptsize $S_1$};
		\node[roundnode]			(E1)		at		(2.5, -0.5)		{\scriptsize $E_1$};
		\node					(Edots)		at		(2.5, -1)		{$\vdots$};
		\node[roundnode]			(Ep)		at		(2.5, -2)		{\scriptsize $E_p$};
		\node[roundnode]			(S1prime)	at		(5, 1)			{\scriptsize $S'_{1}$};
		\node					(ruvdots)	at		(5.2, 0)			{$\vdots$};
		\node[roundnode]			(Skprime)	at		(5, -1)			{\scriptsize $S'_{k'}$};
		
		\draw[-] (S1) edge node {$+$} (RY);
		\draw[-] (Sk) edge node {$+$} (RY);
		\draw[-] (RY) edge node {$+$} (E1);
		\draw[-] (RY) edge node {$+$} (Ep);
		\draw[-] (RY) edge node {$-$} (Y);
		\draw[-] (Y) edge node {$+$} (RYprime);
		\draw[-] (RYprime) edge node {$-$} (S1prime);
		\draw[-] (RYprime) edge node {$-$} (Skprime);
		\draw[-] (E1) edge node {$-$} (RYprime);
		\draw[-] (Ep) edge node {$-$} (RYprime);
	\end{tikzpicture}
	\caption{SR-graph of $G$}\label{fig:directed_sr_proof_b}
\end{subfigure}%
\hfill
\begin{subfigure}{0.45\textwidth}
	\centering
	\begin{tikzpicture}[
		roundnode/.style={},
		squarednode/.style={rectangle, draw=black, thick, minimum size=5mm},
		> = stealth, 
        shorten > = 2pt, 
        shorten < = 2pt, 
        auto,
       	semithick 
		]
		\node					(lvdots)			at		(-0.2, 0)		{$\vdots$};
		\node[roundnode]			(S1)			at		(0, 1.5)			{$S_1$};
		\node[roundnode]			(Sk)			at		(0, -1.5)			{$S_k$};
		\node[squarednode]		(Rstar)			at		(1.5, 0)			{$R^*$};
		\node					(rvdots)			at		(3.2, 0)			{$\vdots$};
		\node[roundnode]			(S1prime)		at		(3, 1.5)			{$S'_1$};
		\node[roundnode]			(Skprime)		at		(3, -1.5)			{$S'_{k'}$};
		
		\draw[-] (S1) edge node {$+$} (Rstar);
		\draw[-] (Sk) edge node {$+$} (Rstar);
		\draw[-] (Rstar) edge node {$-$} (S1prime);
		\draw[-] (Rstar) edge node {$-$} (Skprime);
	\end{tikzpicture}
	\caption{SR-graph of $G^*$}\label{fig:directed_sr_proof_c}
\end{subfigure}%
\hfill
\caption{ {Local structure of the }SR-graphs of $G$ and $G^*$. }\label{fig:proof}
\end{figure}
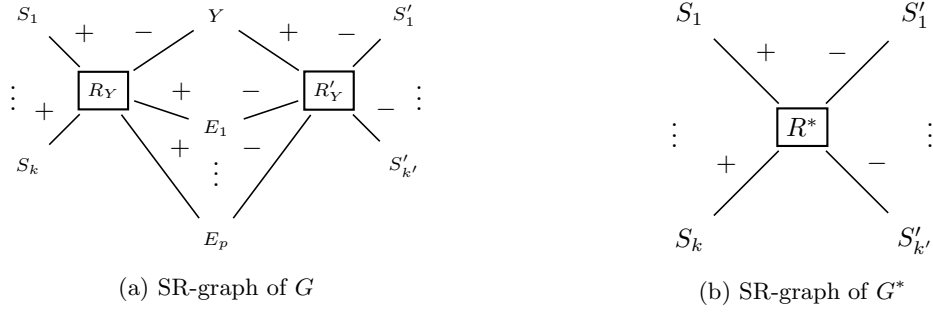

\subsection{Proof of Theorem \ref{thm:main1}({\em ii}\,)} \label{subsec:th(ii)}

To study the positive loop property of the R-graph of $G$ and $G^*$,  we will use Proposition \ref{prop:posloop}, together with the lemma below.

\begin{lemma}\label{lemm:eloops}
	A simple loop in the SR-graph is an e-loop  {if and only if} it contains an even number of segments $S_i \ \text{---} \ R_j \ \text{---} \ S_k$,  { where $S_i, S_k \in \calS$ and $R_j \in \calR$}, such that $L_{SR}(\{S_i, R_j\}) = L_{SR}(\{R_j, S_k\})$. 
\end{lemma}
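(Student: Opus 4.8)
The plan is to exploit the bipartite structure of the SR-graph to rewrite the defining product of an e-loop as a product over the reaction vertices of the loop, and then to reduce the claim to a single parity count.

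First I would write a generic simple loop starting at a species vertex, which is always possible because the SR-graph is bipartite with parts $\calS$ and $\calR$, so the loop alternates species and reaction vertices. Thus the loop takes the form
\[
	\Lambda\colon \quad S_{i_1} \ \text{---} \ R_{j_1} \ \text{---} \ S_{i_2} \ \text{---} \ \cdots \ \text{---} \ S_{i_\lambda} \ \text{---} \ R_{j_\lambda} \ \text{---} \ S_{i_1}\,,
\]
with exactly $\lambda$ reaction vertices, $\lambda$ species vertices, and $2\lambda$ edges in total, for some $\lambda \in \zpluss$. The crucial structural observation is that every edge of $\Lambda$ has exactly one reaction endpoint, so grouping the two edges incident to each $R_{j_\ell}$ partitions the $2\lambda$ edges into exactly $\lambda$ segments of the form $S_{i_\ell} \ \text{---} \ R_{j_\ell} \ \text{---} \ S_{i_{\ell+1}}$, where indices are read modulo $\lambda$. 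Since every edge of $\Lambda$ corresponds to a nonzero entry of $N$, each label $L_{SR}(\{S, R\}) = -\sign N$ lies in $\{+1, -1\}$, so all products below are well defined and never vanish.

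Next I would evaluate the product defining the e-loop over this partition:
\[
	\prod_{k = 1}^{2\lambda} L_{SR}(\{V_{k-1}, V_k\}) = \prod_{\ell = 1}^{\lambda} L_{SR}(\{S_{i_\ell}, R_{j_\ell}\}) \, L_{SR}(\{R_{j_\ell}, S_{i_{\ell+1}}\})\,.
\]
Call a segment an \emph{equal segment} if $L_{SR}(\{S_{i_\ell}, R_{j_\ell}\}) = L_{SR}(\{R_{j_\ell}, S_{i_{\ell+1}}\})$, and a \emph{different segment} otherwise, writing $a$ and $b$ for their respective counts, so that $a + b = \lambda$. The product of the two labels over an equal segment is $+1$, and over a different segment is $-1$, whence the right-hand side above equals $(-1)^b$. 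Finally, $\Lambda$ is an e-loop precisely when this product equals $(-1)^\lambda$, that is, when $(-1)^b = (-1)^\lambda$, equivalently $b \equiv \lambda \pmod 2$, equivalently $a = \lambda - b \equiv 0 \pmod 2$. Thus the e-loop condition is exactly the assertion that the number $a$ of equal segments is even, which is the claim.

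I do not anticipate a genuine obstacle here: the heart of the matter is a one-line parity computation once the edges are correctly partitioned into reaction-centered segments. The only points requiring care are the bookkeeping—ensuring that the partition is taken over the $\lambda$ reaction vertices (not the species vertices) and that the total product factors cleanly over these segments—and observing at the outset that all labels are $\pm 1$, so that the identification of equal versus different segments with $+1$ versus $-1$ factors is legitimate.
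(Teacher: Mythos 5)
Your argument is correct: the partition of the $2\lambda$ edges into $\lambda$ reaction-centered segments is valid because every edge of the bipartite SR-graph has exactly one reaction endpoint, and the parity computation $(-1)^b = (-1)^\lambda \Leftrightarrow a \equiv 0 \pmod 2$ is exactly what is needed. The paper itself gives no proof but simply cites \cite[Lemma 4.4]{angeli--deleenheer--sontag-2010}; your parity argument is the standard one and, in substance, the same as in that reference, so nothing further is required.
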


\begin{proof}
	See \cite[Lemma 4.4]{angeli--deleenheer--sontag-2010}.
	\mybox
\end{proof}

Let $R_Y, R'_Y, R^*, \alpha_1, \ldots, \alpha_k, \alpha'_1, \ldots, \alpha'_{k'}, S_1, \ldots, S_k, S'_1, \ldots, S'_{k'}$ be as in the proof of Theorem \ref{thm:main1}({\em i}\,). 

On the one hand, $L$ is a simple e-loop (respectively, o-loop) of $G^*_{SR}$ which does not go through $R^*$  { if and only if it} is a simple e-loop (respectively, o-loop) of $G_{SR}$ which does not go through either of $R_Y$, $Y$ and $R_Y'$. 

On the other hand, the simple loops of $G^*_{SR}$ which go through $R^*$ are in one-to-one correspondence with the simple loops of $G_{SR}$ going through $R_Y$, $Y$ or $R_Y'$. This correspondence is established as follows. Any simple loop $L$ in $G^*_{SR}$ which goes through $R^*$ has the form
\[
	R^* \ \text{---} \ S_{out} \ \text{---} \ R_{i_1} \ \text{---} \ S_{i_1} \ \text{---} \ \cdots \ \text{---} \ S_{i_\ell} \ \text{---} \ R_{i_\ell} \ \text{---} \ S_{in} \ \text{---} \ R^*
\]
for some pairwise distinct $R_{i_1}, \ldots, R_{i_\ell} \in \forcalR^* \cap \forcalR$ and $S_{out}, S_{i_1}, \ldots, S_{i_\ell}, S_{in} \in \calS^* \cap  {\calS}$  {and not in $\supp e$}. If $S_{out}, S_{in} \in \{S_1, \ldots, S_k\}$ (respectively, $S_{out}, S_{in} \in \{S'_1, \ldots, S'_{k'}\}$), then we need only replace $R^*$ with $R_Y$ (respectively, $R_Y'$). If $S_{out}$ belongs to one of the sets $\{S_1, \ldots, S_k\}$ and $\{S'_1, \ldots, S'_{k'}\}$, and $S_{in}$ belongs to the other, then we need only replace $R^*$ by the segment $R_Y \ \text{---} \ Y \ \text{---} \ R'_Y$. Now note that this correspondence also takes e-loops (respectively, o-loops) to e-loops (respectively, o-loops). Indeed, this follows from Lemma \ref{lemm:eloops}. If $S_{in}$ and $S_{out}$ belong to the same set, the simple loop in $G_{SR}$ has the same number of edges as its corresponding loop in $G^*_{SR}$, and they both have the same sign pattern. If $S_{in}$ and $S_{out}$ belong to different sets, then the consecutive edges in the segments
\[
S_{in} \ \text{---} \ R^* \ \text{---} \ S_{out}
\quad
\text{and}
\quad
S_{in} \ \text{---} \ R_Y \ \text{---} \ Y \ \text{---} \ R_Y' \ \text{---} \ S_{out}
\]
have opposite signs, as one can see in Figures \ref{fig:directed_sr_proof_b} and \ref{fig:directed_sr_proof_c}, so the number of segments $S_i \ \text{---} \ R_j \ \text{---} \ S_k$ such that $S_i \ \text{---} \ R_j$ and $R_j \ \text{---} \ S_k$ have the same sign does not change from the path in $G^*_{SR}$ to the corresponding path in $G_{SR}$.

We conclude that every simple loop in $G_{SR}$ is an e-loop  { if and only if} every simple loop in $G^*_{SR}$ is an e-loop. Since (\Hzero) holds, it follows from Proposition \ref{prop:posloop} that the R-graph of $G$ has the positive loop property  { if and only if} the R-graph of $G^*$ also has the positive loop property. \mybox

\subsection{Proof of Theorem \ref{thm:main2}}

Once we understand the relationships between $\kernel N$ and $\kernel N^*$ and between $K$ and $K^*$, the proof of the theorem will follow somewhat effortlessly.

\subsubsection*{Relationship between $\kernel N$ and $\kernel N^*$}

We first consider the case in which $e$ is nontrivial. By reordering the species and reactions so that $Y, E_1, \ldots, E_p$, and the reactions $y \ \text{---} \ Y$ and $Y \ \text{---} \ y'$ appear at the end, if necessary, we may write the stoichiometric matrices $N$ and $N^*$ of, respectively, $G$ and $G^*$ as
\[
	N
	=
	\left[
		\begin{array}{ccc|c|c}
			 &  &  & - \alpha_1 & \alpha'_1 \\
			 & N_c^* &  & \vdots & \vdots \\
			 &  &  & - \alpha_n & \alpha'_n \\ \hline
			 0 & \cdots & 0 & 1 & -1 \\ \hline
			 0 & \cdots & 0 & -\gamma_1 & \gamma_1 \\
			 \vdots & \ddots & \vdots & \vdots & \vdots \\
			 0 & \cdots & 0 & -\gamma_p & \gamma_p
		\end{array}
	\right]
	\quad
	\text{and}
	\quad
	N^*
	=
	\left[
		\begin{array}{ccc|c}
			 &  &  & \alpha'_1 - \alpha_1 \\
			 & N_c^* &  & \vdots \\
			 &  &  & \alpha'_n - \alpha_n
		\end{array}
	\right]\,,
\]
for some $n \times (m-1)$ matrix $N_c^*$, where $n$ is the number of nonintermediate species, $m - 1$ is the number of reactions not involving $Y$, and where we write $y = \alpha_1S_1 + \cdots + \alpha_nS_n + \gamma_1E_1 + \cdots \gamma_pE_p$ and $y' = \alpha'_1S_1 + \cdots + \alpha'_nS_n + \gamma_1E_1 + \cdots + \gamma_pE_p$. Thus,
\begin{equation}\label{eq:ker_iiad}
	\kernel N = \{(v_1, \ldots, v_{m-1}, v_m, v_m) \in \r^{m + 1}\,|\ (v_1, \ldots, v_m) \in \kernel N^*\}\,.	
\end{equation}

In case $p = 0$, the argument is basically the same. The only difference is that $N$ does not have the $p$ bottom-most rows corresponding to the catalysts. The relationship between $\ker N$ and $\ker N^*$ is also given by (\ref{eq:ker_iiad}).

\subsubsection*{Relationship between $K$ and $K^*$}

We order the reactions $R_1, \ldots, R_m, R_{m + 1}$ of $G$ and $R^*_1, \ldots, R^*_m$ of $G^*$, so that $R_j$ and $R^*_j$ are identified for $j = 1, \ldots, m - 1$, and $R_m = y \ \text{---} \ Y$, $R_{m + 1} = Y \ \text{---} \ y'$, and $R^*_m = y \ \text{---} \ y'$. Note that the R-graph of $G^*$ could be obtained from the R-graph of $G$ by simply collapsing the edge $\{R_m, R_{m+1}\}$ in $G_R$ into the vertex $R^*_m$ in $G^*_R$ (refer to Figures \ref{fig:directed_sr_proof_b} and \ref{fig:directed_sr_proof_c}). Indeed, we have
	\begin{equation}\label{eq:labels1}
		L(\{R_m, R_{m+1}\}) = 1\,,
	\end{equation}
	and, in view of the positive loop property, $L(\{R_j, R_m\}) = L(\{R_j, R_{m + 1}\})$ for any $j \in [m - 1]$ such that $\{R_j, R_m\}, \{R_j, R_{m + 1}\} \in E_R$. Thus,
	\begin{equation}\label{eq:labels2}
		L(\{R_i, R_j\}) = L^*(\{R^*_i, R^*_j\})\,,
		\quad
		\forall i, j \in [m] \ \colon \ \{R_i, R_j\} \in E_R\,,
	\end{equation}
	and
	\begin{equation}\label{eq:labels3}
		L(\{R_j, R_{m+1}\}) = L^*(\{R^*_j, R^*_m\})\,,
		\quad
		\forall j \in [m - 1] \ \colon \ \{R_j, R_{m+1}\} \in E_R\,.
	\end{equation}
	
	Let $\sigma = (\sigma_1, \ldots, \sigma_m, \sigma_{m+1})$ and $\sigma^* = (\sigma^*_1, \ldots, \sigma^*_m)$ be the sign patterns of the  {orthants} $K$ and $K^*$ constructed via Remark \ref{rem:orthant_cone} for $G$ and $G^*$, respectively. It follows from (\ref{eq:sigma_def_b}) and (\ref{eq:labels1})--(\ref{eq:labels3}) that $\sigma_m = \sigma_{m + 1}$ and $\sigma^*_j = \sigma_j$, $j = 1, \ldots, m - 1$.

\subsubsection*{Proof of Theorem \ref{thm:main2}}

We may summarize the discussion above as a lemma.

\begin{lemma}\label{lemm:summary}
	Assume the same hypotheses as in Theorem \ref{thm:main2}. Then
	\[
		\kernel N = \{(v_1, \ldots, v_{m-1}, v_m, v_m) \in \r^{m+1}\,|\ (v_1, \ldots, v_m) \in \kernel N^*\}
	\]
	and the sign pattern $\sigma$ of $K$ is given by
	\[
		\sigma = (\sigma^*_1, \ldots, \sigma^*_m, \sigma^*_m)\,,
	\]
	where $(\sigma^*_1, \ldots, \sigma^*_m) = \sigma^*$ is the sign pattern of $K^*$.
\end{lemma}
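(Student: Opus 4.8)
The plan is to read the lemma off directly from the two computations in the preceding subsections, adding only the one identity that those subsections leave implicit. The first displayed equation is exactly the kernel relation~(\ref{eq:ker_iiad}), which was already obtained from the explicit block forms of $N$ and $N^*$ (and shown to hold verbatim in the degenerate case $p = 0$); so for this half I would simply invoke~(\ref{eq:ker_iiad}), with nothing further to prove.

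For the sign-pattern half I would begin from the two relations established in the subsection relating $K$ and $K^*$, namely $\sigma_m = \sigma_{m+1}$ and $\sigma^*_j = \sigma_j$ for $j = 1, \ldots, m-1$. These already supply the first $m-1$ coordinates of the claimed equality $\sigma = (\sigma^*_1, \ldots, \sigma^*_m, \sigma^*_m)$, so the only gap is the single identity $\sigma_m = \sigma^*_m$ (the last coordinate then follows from $\sigma_{m+1} = \sigma_m$). I would close this gap using the path definition~(\ref{eq:sigma_def_b}) together with the label identities~(\ref{eq:labels1})--(\ref{eq:labels3}): take a simple path in $G^*_R$ from the base vertex of the component of $R^*_m$ to $R^*_m$, and lift it to $G_R$ by attaching its last edge to whichever of $R_m$ or $R_{m+1}$ carries it. By~(\ref{eq:labels2})--(\ref{eq:labels3}) the lifted path has the same product of labels and therefore computes $\sigma_m$ or $\sigma_{m+1}$; since these are equal, the value is $\sigma^*_m$ in either case.

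The main thing to watch is the bookkeeping of base vertices when $G_R$ is disconnected, since Remark~\ref{rem:orthant_cone} then builds $\sigma$ one component at a time, anchored at the smallest reaction index. I expect this to be the only delicate point, and it is mild: the shared intermediate $Y$ forces $\{R_m, R_{m+1}\} \in E_R$, so $R_m$ and $R_{m+1}$ lie in one component that collapses to the component of $R^*_m$, and for every component the smallest index is left unchanged by the collapse (the only reindexed vertices are $R_m, R_{m+1} \mapsto R^*_m$, all with index $\geqslant m$). Hence the two constructions are anchored at matching base points and the lifted-path argument applies uniformly. With this verified, both displayed equalities are in hand and the lemma follows at once.
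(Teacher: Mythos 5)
Your proposal is correct and follows essentially the same route as the paper, whose proof of this lemma is literally just a summary of the two preceding computations (the kernel identity \eqref{eq:ker_iiad} and the sign-pattern relations derived from \eqref{eq:sigma_def_b} and \eqref{eq:labels1}--\eqref{eq:labels3}). Your path-lifting argument for the one step the paper leaves implicit, namely $\sigma_m=\sigma^*_m$, together with the base-vertex bookkeeping for disconnected R-graphs, is a correct and welcome elaboration of what the paper compresses into ``it follows from \eqref{eq:sigma_def_b} and \eqref{eq:labels1}--\eqref{eq:labels3}.''
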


It follows from the lemma that
\[
	(v_1, \ldots, v_m) \in \ker N^* \cap K^*
	\quad
	\Leftrightarrow
	\quad
	(v_1, \ldots, v_m, v_m) \in \ker N \cap K\,,
\]
and, moreover,
\[
	(v_1, \ldots, v_m) \in \interior K^*
	\quad
	\Leftrightarrow
	\quad
	(v_1, \ldots, v_m, v_m) \in \interior K\,.
\]
This establishes Theorem \ref{thm:main2}.

\paragraph{Acknowledgements. }
	Elisenda Feliu, Michael Marcondes de Freitas and Carsten Wiuf acknowledge funding from the Danish Research Council of Independent Research.  {We would also like to thank Anne Shiu and Mitchell Eithun for their careful reading of an earlier version of this paper and valuable comments.}


\end{document}